\begin{abstract}
%Word $W$ is an \emph{instance} of $V$ provided there is a homomorphism $\phi$ mapping letters to nonempty words so that $\phi(V) = W$. For example, taking $\phi$ such that $\phi(c)=fr$, $\phi(o)=e$ and $\phi(l)=zer$, we see that ``freezer'' is an instance of ``cool''.
%
%Let $\II_n(V,[q])$ be the probability that a random length $n$ word on the alphabet $[q] = \{1,2,\cdots q\}$ is an instance of $V$. Having previously shown that $\lim_{n \rightarrow \infty} \II_n(V,[q])$ exists, we now calculate this limit for two Zimin words, $Z_2 = aba$ and $Z_3 = abacaba$.
%\end{abstract}
\author{Joshua Cooper\affiliationmark{1}%\thanks{I am fully supported.}
  \and Danny Rorabaugh\affiliationmark{2}}
\title[Asymptotic Density of Zimin Words]
     {Asymptotic Density of Zimin Words}
\affiliation{
  % one line per affiliation, no postal codes, grant numbers or similar
  Department of Mathematics, University of South Carolina, Columbia\\
  Department of Mathematics and Statistics, Queen's University, Kingston}
\keywords{free words, homomorphism density, limit theory, ZImin}
\begin{document}
\publicationdetails{18}{2016}{3}{3}{1302}
\maketitle
\begin{abstract}
Word $W$ is an \emph{instance} of word $V$ provided there is a homomorphism $\phi$ mapping letters to nonempty words so that $\phi(V) = W$. For example, taking $\phi$ such that $\phi(c)=fr$, $\phi(o)=e$ and $\phi(l)=zer$, we see that ``freezer'' is an instance of ``cool''.

Let $\II_n(V,[q])$ be the probability that a random length $n$ word on the alphabet $[q] = \{1,2,\cdots q\}$ is an instance of $V$. Having previously shown that $\lim_{n \rightarrow \infty} \II_n(V,[q])$ exists, we now calculate this limit for two Zimin words, $Z_2 = aba$ and $Z_3 = abacaba$.
\end{abstract}

%%%%%%%%%%%%%%%%%%
\section{Introduction}

%%%%%%%%%%%%%%
%\subsection{Words}

Our present interest is in words--not the linguistic units with lexical value, but rather strings of symbols or letters.
We are interested in words as abstract discrete structures.
In particular, we are investigating elements of a free monoid. 
A monoid is an algebraic structure consisting of a set, an associative binary operation on the set, and an identity element. 
A free monoid is defined over some generating set of elements, which we view as an alphabet of letters. 
Its binary operation is simply concatenation, its elements--called free words--are all finite strings of letters, and its identity element is the empty word (generally denoted with $\varepsilon$ or $\lambda$). 
Often, the operation of a monoid is called multiplication, so it is fitting that a ``subword'' of a free word is called a ``factor.'' 
For example, in the free monoid over alphabet $\{a,b,c,d,r\}$, the word $cadabra$ is a factor of $abracadabra$ because $abracadabra$ is the product of $abra$ and $cadabra$. 

%%%%%%%%%%%%%%%%%%%%%%%%%%%%%%%
\subsection{Combinatorial Limit Theory}

In an era of massive technological and computational advances, we have large systems for transportation, communication, education, and commerce (to name a few examples). 
We also possess massive quantities of information in every part of life. 
Therefore, in many applications of discrete mathematics, the useful theory is that which is relevant to arbitrarily large discrete structures. 
For example, graphs can be used to model a computer network, with each vertex representing a device and each edge a data connection between devices. 
The most well-known computer network, the Internet, consists of billions of devices with constantly changing connections; one cannot simply create a database of all billion-vertex graphs and their properties. 

We use the term ``combinatorial limit theory'' in general reference to combinatorial methods which help answer the following question: What happens to discrete structures as they grow large? 
In the combinatorial limit theory of graphs, major recent developments include the flag algebras of \textcite{R-07} and the graph limits of Borgs, Chayes, Freedman, Lov\'asz, Schrijver, S\'os, Szegedy, Vesztergombi, etc. (see \cite{L-12}).
Given the fundamental reliance of these methods on graph homomorphisms and graph densities, we strive to apply the same ideas to free words, or henceforth, simply ``words.''

%%%%%%%%%%%%%%%%%%%%%%%%%
\subsection{Definitions} \label{words}

\begin{defn} \label{defn:word}
	For a fixed set $\Sigma$, called an \emph{alphabet},  denote with $\Sigma^*$ the set of all finite words formed by concatenation of elements of $\Sigma$, called \emph{letters}.
	Words in $\Sigma^*$ are called \emph{$\Sigma$-words}.
	The set of length-$n$ $\Sigma$-words is denoted with $\Sigma^n$.
	The \emph{empty word}, denoted $\varepsilon$, consisting of zero letters, is a $\Sigma$-word for any alphabet $\Sigma$.
\end{defn}

The set $\Sigma^*$, together with the associative binary operation of concatenation and the identity element $\varepsilon$, forms a free monoid.
We denote concatenation with juxtaposition.
Generally we use natural numbers or minuscule Roman letters as letters and majuscule Roman letters (especially $T,U,V,W,X,Y,$ and $Z$) to name words.
Majuscule Greek letters (especially $\Gamma$ and $\Sigma$) name alphabets, though for a standard $q$-letter alphabet, we frequently use the set $[q] = \{1, 2, \ldots, q \}$.

\begin{ex}
	Alphabet $[3]$ consists of letters 1, 2, and 3. 
	The set of $[3]$-words is 
	\[\{1,2,3\}^* = \{\varepsilon, 1, 2, 3, 11, 12, 13, 21, 22, 23, 31, 32, 33, 111, 112, 113, 121, \ldots \}.\]
\end{ex}

\begin{defn} \label{defn:letter}
	A word $W$ is formed from the concatenation of finitely many letters.
	If letter $x$ is one of the letters concatenated to form $W$, we say $x$ \emph{occurs in} $W$, or $x \in W$.
	For natural number $n \in \N$, an $n$-fold concatenation of word $W$ is denoted $W^n$.
	The \emph{length} of word $W$, denoted $|W|$, is the number of letters in $W$, counting multiplicity.
	$\L(W)$, the \emph{alphabet generated by} $W$, is the set of all letters that occur in $W$.
	For $q \in \N$, word $W$ is \emph{$q$-ary} provided $|L(W)| \leq q$.
	We use $||W||$ to denote the number of letter recurrences in $W$, so $||W|| = |W| - |L(W)|$.
\end{defn}

\begin{ex}
	Let $W = bananas$.
	Then $a,b \in W$, but $c \not\in W$. 
	Also $|W| = 7$, $L(W) = \{a,b,n,s\}$, and $||W|| = 3$. 
	
	For the empty word, we have $|\varepsilon| = 0$, $L(\varepsilon) = \emptyset$, and $||\varepsilon|| = 0$.
\end{ex} 

\begin{defn} \label{defn:factor}
	Word $W$ has $\binom{|W|+1}{2}$ (nonempty) \emph{substrings}, each defined by an integer pair $(i,j)$ with $0\leq i < j \leq |W|$.
	Denote with $W[i,j]$ the word in the $(i,j)$-substring, consisting of $j-i$ consecutive letters of $W$, beginning with the $(i+1)$-th.

	Word $V$ is a \emph{factor} of $W$, denoted $V \leq W$, provided $V = W[i,j]$ for some integers $i$ and $j$ with $0\leq i < j \leq |W|$; equivalently, $W = SVT$ for some (possibly empty) words $S$ and $T$.
\end{defn}

\begin{ex}
	$nana \leq nana \leq bananas$, with $nana = nana[0,4] = bananas[2,6]$.
\end{ex}

%%%%%%%%%%%%%%%%%%%%%%%%%%
%\subsection{Word Avoidability} \label{avoid}

\begin{defn}
	For alphabets $\Gamma$ and $\Sigma$, every (monoid) homomorphism $\phi: \Gamma^* \rightarrow \Sigma^*$ is uniquely defined by a function $\phi:\Gamma \rightarrow \Sigma^*$. 
	We call a homomorphism \emph{nonerasing} provided it is defined by $\phi:\Gamma \rightarrow \Sigma^* \setminus \{\varepsilon\}$; that is, no letter maps to $\varepsilon$.
\end{defn}

\begin{ex}
	Consider the homomorphism $\phi: \{b,n,s,u\}^* \rightarrow \{m,n,o,p,r,v\}^*$ defined by Table~\ref{table:exFunction}. 
	Then $\phi(sun) = moon$ and $\phi(bus) = vroom$.
\end{ex}

	\begin{table}[ht]
	\centering
		\caption{Example of a nonerasing function.} \label{table:exFunction}
		
		\begin{tabular}{c | c | c | c | c} 
			$x$ & $b$ & $n$ & $s$ & $u$\\ \hline
			$\phi(x)$ & $vr$ & $n$ & $m$ & $oo$
		 \end{tabular}
	\end{table}

\newpage

\begin{defn} \label{defn:instance}
	$U$ is an \emph{instance of $V$}, or a \emph{$V$-instance}, provided $U = \phi(V)$ for some nonerasing homomorphism $\phi$; equivalently,
	\begin{itemize}
		\item $V = x_0x_1 \cdots x_{m-1}$ where each $x_i$ is a letter;
		\item $U = A_0A_1 \cdots A_{m-1}$ with each word  $A_i \neq \varepsilon$ and $A_i = A_j$ whenever $x_i=x_j$.
	\end{itemize}
	$W$ \emph{encounters} $V$, denoted $V \preceq W$, provided $U \leq W$ for some $V$-instance $U$.
	If $W$ fails to encounter $V$, we say $W$ \emph{avoids} $V$.
\end{defn}

To help distinguish the encountered word and the encountering word, ``pattern'' is elsewhere used to refer to $V$ in the encounter relation $V \preceq W$. 
Also, an instance of a word is sometimes called a ``substitution instance'' and ``witness'' is sometimes used in place of encounter.

\begin{defn} \label{def:unavoidable}
	A word $V$ is \emph{unavoidable} provided, for any finite alphabet, there are only finitely many words that avoid $V$.
\end{defn}

The first classification of unavoidable words was by \textcite{BEM-79}. Three years later, Zimin published a fundamentally different classification of unavoidable words (\cite{Z-82} in Russian, \cite{Z-84} in English).

\begin{defn} \label{defn:Zimin}
	Define the \emph{$n$-th Zimin word} recursively by $Z_0 := \varepsilon$ and, for $n \in \N$, $Z_{n+1} = Z_nx_nZ_n$. Using the English alphabet rather than indexed letters:
	\[Z_1 = \textbf{a}, \quad Z_2 = a\textbf{b}a, \quad Z_3 = aba\textbf{c}aba, \quad Z_4 = abacaba\textbf{d}abacaba, \quad \ldots . \]
\end{defn}
Equivalently, $Z_n$ can be defined over the natural numbers as the word of length $2^n-1$ such that the $i$-th letter, $1 \leq i < 2^n$, is the 2-adic order of $i$.
	
\begin{thm}[\cite{Z-84}] \label{thm:Zimin}
	A word $V$ with $n$ distinct letters is unavoidable if and only if $Z_n$ encounters $V$.
\end{thm}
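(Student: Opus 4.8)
The plan is to prove the equivalence in two directions, both resting on two structural facts: that the encounter relation $\preceq$ is transitive, and that each Zimin word $Z_n$ is itself unavoidable. Transitivity is the quick part: if $\phi(U)$ is a factor of $V$ and $\psi(V)$ is a factor of $W$ for nonerasing homomorphisms $\phi,\psi$, then applying $\psi$ to a factorization $V = S\,\phi(U)\,T$ shows $(\psi\circ\phi)(U)$ is a factor of $\psi(V)$, hence of $W$; since $\psi\circ\phi$ is again nonerasing, $U\preceq V$ and $V\preceq W$ yield $U\preceq W$. I would record this first, since it drives the easy direction.

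For the direction \emph{$Z_n$ encounters $V$ implies $V$ is unavoidable}, it suffices to prove $Z_n$ is unavoidable. Granting that, any sufficiently long $W$ over a fixed alphabet satisfies $Z_n\preceq W$, and transitivity with the hypothesis $V\preceq Z_n$ gives $V\preceq W$, so all but finitely many words encounter $V$. To prove $Z_n$ unavoidable I would induct on $n$, establishing a bound $N(q,n)$ so that every word in $[q]^{\,\ge N(q,n)}$ encounters $Z_n$. The base $n=1$ is immediate. For the step, write $Z_{n+1}=Z_n x_n Z_n$, set $M=N(q,n)$, and cut a long word into many consecutive length-$M$ blocks; by the inductive hypothesis each block contains a factor that is a $Z_n$-instance, necessarily of length at most $M$. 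Since there are fewer than $q^{M+1}$ nonempty words of length $\le M$, once there are enough blocks two \emph{non-adjacent} blocks must supply the \emph{same} instance $A$. The factor running from the first copy of $A$ to the second then has the form $A\,B\,A$ with $B\neq\varepsilon$, and extending the defining homomorphism by $x_n\mapsto B$ exhibits it as a $Z_{n+1}$-instance.

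For the reverse direction, \emph{$V$ unavoidable implies $Z_n$ encounters $V$} where $n=|L(V)|$, I would argue by induction on $n$ through a free-letter reduction. Using the characterization of unavoidable words by iterated deletion of \emph{free} letters (a letter whose set of immediate left-neighbors is disjoint from its set of immediate right-neighbors), an unavoidable $V$ has a free letter $c$ whose deletion leaves an unavoidable word $V'$ on $n-1$ letters. By the inductive hypothesis $V'\preceq Z_{n-1}$, so some instance of $V'$ sits inside $Z_{n-1}$; I would then lift this to a $V$-instance inside $Z_n=Z_{n-1}x_{n-1}Z_{n-1}$ by distributing the images of $c$ across the two copies of $Z_{n-1}$ and the central separator, using disjointness of the neighbor sets of $c$ to keep the block assignment consistent. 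Contrapositively, this amounts to saying that if $Z_n$ avoids $V$ then the self-similar structure of the Zimin words yields an infinite $V$-avoiding family over a fixed alphabet.

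The hard part will be the two nontrivial ingredients rather than the bookkeeping. In the easy direction the crux is the pigeonhole step forcing two occurrences of an \emph{identical} $Z_n$-instance separated by a nonempty gap: a priori the instances found in different blocks could all be distinct, and this is resolved only because the inductive bound caps their length, making the supply of possible instances finite. In the harder direction the delicate point is the lifting: both guaranteeing that a free letter exists and, more importantly, reinserting its occurrences into the doubled word $Z_{n-1}x_{n-1}Z_{n-1}$ so that equal letters of $V$ continue to map to equal blocks. Making the block boundaries line up across the two halves is where the genuine work lies, and it is the step I expect to demand the most care.
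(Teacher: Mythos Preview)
The paper does not prove this theorem; it is quoted as background with a citation to Zimin's original article, so there is no proof here to compare your attempt against. Your ``if'' direction is fine and standard: transitivity of $\preceq$ together with the block-and-pigeonhole induction showing each $Z_n$ unavoidable is exactly the usual argument.

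The ``only if'' direction has a genuine gap. With your parenthetical definition of a \emph{free} letter (disjoint sets of immediate left- and right-neighbours), $Z_2=aba$ already has no free letter: $b$ is flanked by $a$ on both sides, and $a$ has $b$ as both a left- and a right-neighbour; your induction therefore stalls at $n=2$. The correct Bean--Ehrenfeucht--McNulty/Zimin notion of ``free'' is phrased via connected components of the bipartite adjacency graph (two copies $x^L,x^R$ of each letter, with an edge $y^L\text{--}z^R$ whenever $yz$ is a factor of $V$), and even under that corrected definition your claim that ``deletion leaves an unavoidable word'' fails in general: deleting the now-free letter $b$ from $aba$ yields $aa$, which is avoidable. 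The reduction is not confluent, so one cannot pick an arbitrary free letter and recurse. The actual proof of this implication runs by contrapositive---if \emph{no} chain of free-set deletions reaches $\varepsilon$, one explicitly constructs arbitrarily long $V$-avoiding words---and only afterwards, along a successful reduction chain, performs a lifting to conclude $V\preceq Z_n$. Your sketch collapses these two separate stages into one and in effect assumes what must be proved.
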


With Zimin's concise characterization of unavoidable words, a natural combinatorial question follows: How long must a $q$-ary word be to guarantee that it encounters a given unavoidable word? 
Define $\f(n,q)$ to be the smallest integer $M$ such that every $q$-ary word of length $M$ encounters $Z_n$.

In 2014, three preprints by different authors appeared, each independently proving bounds for $\f(n,q)$: \textcite{CR-14}, \textcite{T-14}, and \textcite{RS-15}.

%%%%%%%%%%%%%%%%%%%%%%%%%%%%%%%%%
\section{Asymptotic Probability of Being Zimin} \label{ASYMP}

\begin{defn} \label{defn:density}
	Let $\mathbb{I}_n(V,q)$ be the probability that a uniformly randomly selected length-$n$ $q$-ary word is an instance of $V$. 
That is, 
\[
\II_n(V,q) =\frac{|\{W \in [q]^n \mid \phi(V)=W \mbox{ for some nonerasing homomorphism } \phi:{\rm L}(V)^* \rightarrow [q]^*\}|}{ q^{n}}.
\]

Denote $\II(V,q) = \lim_{n\rightarrow \infty}\mathbb{I}_n(V,q)$.
\end{defn}

\textcite{CR-15} prove that $\II(V,q)$ exists for any word $V$.
Moreover, they establish the following dichotomy for $q \geq 2$: $\II(V,q) = 0$ if and only if $V$ is doubled (that is, every letter in $V$ occurs at least twice).
Trivially, if $V$ is composed of $k$ distinct, nonrecurring letters, then $\II_n(V,[q])=1$ for $n\geq k$, so $\II(V,q) = 1$.
But if $V$ contains at least one recurring letter, it becomes a nontrivial task to compute $\II(V,q)$.
We have from previous work the following bounds for the instance probability of Zimin words.

\begin{cor}
	For $n,q \in \Z^+$, 
	\[ q^{-2^n + n + 1} \leq \II(Z_n,q) \leq \prod_{j = 1}^{n-1} \frac{1}{q^{(2^j-1)} - 1}. \]
\end{cor}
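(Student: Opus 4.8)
The plan is to exploit the self-similar structure of Zimin words. Since $Z_n = Z_{n-1}x_{n-1}Z_{n-1}$ with $x_{n-1}$ a letter not occurring in $Z_{n-1}$, a word $W$ is a $Z_n$-instance precisely when $W = UzU$ for some $Z_{n-1}$-instance $U$ and some nonempty word $z$: restrict a witnessing homomorphism to the first $n-1$ letters to obtain $U$ and read off $z$ as the image of $x_{n-1}$, and conversely extend. Writing $N_k(\ell)$ for the number of $Z_k$-instances in $[q]^\ell$ and $p_k(\ell) = N_k(\ell)/q^\ell = \II_\ell(Z_k,q)$, I would read off both bounds from this one decomposition, the only delicate points being the passage to the limit and the control of overcounting.

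For the lower bound I would isolate a cheap sub-family of instances. The central letter $x_{n-1}$ occurs exactly once in $Z_n$, so sending it to an arbitrary nonempty word $Q$ incurs no matching cost, while sending every other letter to a single symbol makes all the \emph{repeated} blocks as short as possible. This produces instances of the form $W = uQu$, where $u$ has length $2^{n-1}-1$ and realizes the equality pattern of $Z_{n-1}$ with single letters; there are exactly $q^{n-1}$ such words $u$. For $N \ge 2^n-1$ the length-$(2^{n-1}-1)$ prefix and suffix are disjoint and the middle block $Q$ is free, so a direct computation gives $\Pr_{W \in [q]^N}[\,W = uQu\,] = q^{n-1}\cdot q^{-2(2^{n-1}-1)} = q^{-2^n+n+1}$, independent of $N$. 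Hence $p_n(N)\ge q^{-2^n+n+1}$ for all large $N$, and the limit inherits the bound. The one idea to get right is that the slack must be absorbed by the unique non-recurring letter; dumping it into the frequent letter $a$ instead would force many long blocks to agree and drive the probability to $0$.

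For the upper bound I would first turn the decomposition into a recursion. Conditioning on $\ell = |U|$ and using that the prefix being a $Z_{n-1}$-instance and the disjoint suffix agreeing with it occurs with probability $p_{n-1}(\ell)q^{-\ell}$, a union bound over $\ell$ gives $p_n(N)\le\sum_{\ell=1}^{\lfloor(N-1)/2\rfloor}p_{n-1}(\ell)q^{-\ell}$, whence $\II(Z_n,q)\le\sum_{\ell\ge1}p_{n-1}(\ell)q^{-\ell}=A_{n-1}(q^{-2})$ as $N\to\infty$, where $A_k(x)=\sum_\ell N_k(\ell)x^\ell$. Separately, counting decomposition pairs $(U,z)$ with multiplicity over-counts words and yields $N_n(\ell)\le\sum_{2a+b=\ell}N_{n-1}(a)q^b$, i.e.\ $A_n(x)\le A_{n-1}(x^2)\cdot\frac{qx}{1-qx}$ coefficientwise. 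Starting from $A_1(x)=qx/(1-qx)$ and iterating this halving recursion gives $A_{n-1}(x)\le\prod_{i=0}^{n-2}\frac{qx^{2^i}}{1-qx^{2^i}}$; substituting $x=q^{-2}$ and simplifying each factor to $1/(q^{2^{i+1}-1}-1)$ reindexes to the claimed product $\prod_{j=1}^{n-1}1/(q^{2^j-1}-1)$.

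I expect the main obstacle to be the upper bound: one must justify the overcounting step as a coefficientwise inequality between generating functions (distinct witnessing homomorphisms can produce the same word $UzU$, so only an inequality survives), verify that every geometric series in the telescoped product converges at $x=q^{-2}$ (which holds since $q\cdot q^{-2^{i+1}}<1$ for all $i\ge0$), and confirm the interchange of limit and summation when passing from $p_n(N)$ to $A_{n-1}(q^{-2})$. The lower bound, by contrast, reduces to a single clean probability computation once the right sub-family of instances is chosen.
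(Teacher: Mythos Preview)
Your argument is correct. The paper's own proof is a two-line citation: it invokes Theorem~3.3 of \textcite{CR-15}, which states $\II(V,q)\ge q^{-\|V\|}$ for any nondoubled $V$, and Theorem~4.14 of \textcite{R-15}, which gives the product upper bound in terms of the letter multiplicities $r_j$. The corollary then follows by plugging in $\|Z_n\|=2^n-1-n$ and $r_j=2^j$.

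Your route is different in that it is self-contained and exploits the specific recursion $Z_n=Z_{n-1}x_{n-1}Z_{n-1}$ rather than quoting general results. For the lower bound you are essentially reproving the special case of the cited $q^{-\|V\|}$ bound: send each recurring letter to a single symbol and let the unique nonrecurring letter absorb the slack. For the upper bound, your union bound $\II(Z_n,q)\le A_{n-1}(q^{-2})$ together with the coefficientwise recursion $A_k(x)\le A_{k-1}(x^2)\cdot qx/(1-qx)$ telescopes cleanly to the stated product; this is a direct combinatorial derivation of what the paper obtains by appeal to the general multiplicity bound. What the paper's approach buys is generality (the cited theorems apply to arbitrary nondoubled words), while yours buys transparency: a reader sees exactly where each factor $1/(q^{2^j-1}-1)$ comes from, namely one level of the Zimin recursion summed geometrically. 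Both arguments are short; yours simply unpacks the black boxes.
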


\begin{proof}
For the lower bound, note that $||Z_n|| = |Z_n| - |\L(Z_n)| = (2^n-1) - (n)$. 
Theorem~3.3 from \textcite{CR-15} tells us that for all $q \in \Z^+$ and nondoubled $V$, $\II(V,q) \geq q^{-||V||}$.

For the upper bound, observe that the $n$ letters occurring in $Z_n$ have the following multiplicities: $\ang{r_j = 2^j : 0 \leq j < n}$. 
Since there is exactly one nonrecurring letter in $Z_n$, $r_0 = 2^0 = 1$, Theorem~4.14 from \textcite{R-15} provides an upper bound of $\prod_{j = 1}^{n-1} \frac{1}{q^{(r_j-1)} - 1}$.
\end{proof}

A nice property of these bounds is that they are asymptotically equivalent as $q \rightarrow \infty$.
For some specific $V$, we can do better.
Presently, we provide infinite series for computing the asymptotic instance probability $\II(V,q)$ for two Zimin words, $V = Z_2 = aba$ (Section~\ref{IZ2}) and $V = Z_3 = abacaba$ (Section~\ref{IZ3}).
Table~\ref{table:Z2Z3} below gives numerical approximations for $2 \leq q \leq 6$. 
Our method also provides upper bounds on  $\II(Z_n,q)$ for general $n$ (Section~\ref{IZn}).

\begin{table}[ht]
\centering

    	\caption{Approximate values of $\II(Z_2,q)$ and $\II(Z_3,q)$ for $2 \leq q \leq 6$.} \label{table:Z2Z3}

    	\begin{tabular}{c}
	$\begin{array}{c | c | c | c | c | c | c }
    		q & 2 & 3 & 4 & 5 & 6 & \cdots \\ \hline
        		\II(Z_2,q) & 0.7322132 & 0.4430202 & 0.3122520 & 0.2399355 & 0.1944229 & \cdots\\ \hline
    		\II(Z_3,q) & 0.1194437 & 0.0183514 & 0.0051925 & 0.0019974 & 0.0009253 & \cdots\\
	\end{array}$
    	\end{tabular}

\end{table}

%%%%%%%%%%%%%%%%%%%%%%%%%%%%%%%%%%%%
\section{Calculating \texorpdfstring{$\II(Z_2,q)$}{the asymptotic instance probability of Z2}} \label{IZ2}

%%SEE THIS: https://oeis.org/A094536, https://oeis.org/A003000

\begin{defn}
	Nonempty word $V$ is a \emph{bifix} of word $W$ provided $W = VA = BV$ for some nonempty words $A$ and $B$; that is, $V$ is both a proper prefix and suffix of $W$.	
	Moreover, if bifix $V$ is an instance of word $Z$, then $V$ is a \emph{$Z$-bifix} of $W$.
	If word $W$ has no bifixes, $W$ is \emph{bifix-free}. 
	If $W$ has no $Z$-bifix, $W$ is \emph{$Z$-bifix-free}. 
\end{defn}

\begin{lem}
	If word $W$ has a bifix, then it has a bifix of length at most $\lfloor |W|/2 \rfloor$.
\end{lem}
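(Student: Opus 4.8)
The plan is to argue by examining a \emph{shortest} bifix of $W$ and showing it cannot be too long. The key structural fact to exploit is that any bifix strictly longer than half the word forces its prefix-copy and its suffix-copy to overlap, and that the overlapping region is itself a strictly shorter bifix.

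Concretely, set $n = |W|$ and suppose $V$ is a bifix with $|V| = k$. If $k \le \lfloor n/2 \rfloor$ we are already done, so I would assume $k > \lfloor n/2 \rfloor$, hence $2k > n$. Writing $p = n - k$, the definition of bifix gives $V = W[0,k]$ (as a prefix) and $V = W[p,n]$ (as a suffix), with $0 < p < k$. First I would single out the overlap $U := W[p,k]$, a word of length $2k - n$, which is positive precisely because $k > n/2$ and strictly smaller than $k$ because $p > 0$.

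The heart of the argument is to verify that this $U$ is simultaneously a prefix and a suffix of $W$. Reading $U$ off the prefix-copy $W[0,k]$ exhibits it as the length-$(2k-n)$ suffix of $V$ (using $k - (2k-n) = p$), while reading it off the suffix-copy $W[p,n]$ exhibits it as the length-$(2k-n)$ prefix of $V$ (using $p + (2k-n) = k$). Since $V$ is itself both a prefix and a suffix of $W$, it follows that $U$ is a bifix of $W$ with $0 < |U| < |V|$. I expect this step to demand the most care: the index arithmetic must be checked carefully to confirm that the two occurrences of $U$ genuinely coincide and sit at the two ends of $W$, so that the overlap really is a bifix rather than merely an internal factor.

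Finally I would close by minimality. Among all bifixes of $W$, choose one of smallest length $k$. The preceding paragraph shows that if $k > \lfloor n/2 \rfloor$ we could manufacture a strictly shorter bifix, contradicting minimality. Hence the shortest bifix has length at most $\lfloor n/2 \rfloor$, proving the lemma. Equivalently, one may present this as infinite descent on $|V|$, iterating the passage $V \mapsto U$ until the length first drops to at most $\lfloor n/2 \rfloor$.
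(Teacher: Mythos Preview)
Your proof is correct and follows essentially the same approach as the paper's: both take a minimal-length bifix, assume its length exceeds $\lfloor |W|/2\rfloor$, identify the overlap of the prefix-copy and suffix-copy as a strictly shorter bifix, and derive a contradiction. The paper phrases the overlap via a decomposition $W=W_1W_2W_3$ with $W_1W_2=W_2W_3$ the bifix and $W_2$ the overlap, whereas you use substring indices $U=W[p,k]$; the content is the same.
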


\begin{proof}
	Let $W$ be a word with minimal-length bifix of length $k$, $\lfloor |W|/2 \rfloor < k < |W|$. Then we can write $W = W_1W_2W_3$ where $W_1W_2 = W_2W_3$ and $|W_1W_2| = k = |W_2W_3|$. But then $W$ has bifix $W_2$ with $|W_2| < k$, which contradicts our selection of the shortest bifix of $W$.
\end{proof}

Although some words are neither $Z_2$-instances nor bifix-free, the proportion of such words is asymptotically $0$. 
Hence, $1-\II(Z_2,q)$ was previously computed by \textcite{N-73} as the asymptotic probability that a word is bifix-free. 
Equivalently, in a paper of \textcite{GO-81} on the period, or overlap, of words, $1-\II(Z_2,q)$ was computed as the proportion of strings with no period. 
Rather than restate these results, we reformulate them presently for completeness and as a warm-up for calculating $\II(Z_3,q)$. 

Let $a_\ell = a_\ell^{(q)}$ be the number of bifix-free $q$-ary strings of length $\ell$. For $q=2$, this is sequence oeis.org/A003000; for $q=3$, oeis.org/A019308 \parencite{OEIS}.
%Use $a_\ell= a_\ell^{(q)}$ to count bifix-free words of length $\ell$.

\begin{lem}[\cite{N-73}, Theorem 1]
	$a_\ell = a_\ell^{(q)}$ has the following recursive definition:
	\begin{eqnarray*}
		a_0 & = & 0;\\
		a_1 & = & q;\\
		a_{2k} & = & qa_{2k-1} - a_k;\\
		a_{2k+1} & = & qa_{2k}.
	\end{eqnarray*}
\end{lem}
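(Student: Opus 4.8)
The plan is to settle the two base cases directly and to derive both recursions from a single bijection: deleting the letter in position $k$ (positions indexed from $0$) of a word of length $2k$ or $2k+1$. The base cases are immediate, since neither the empty word nor a one-letter word has a nonempty proper prefix: every one-letter word is bifix-free, so $a_1=q$, while $a_0=0$ is a boundary convention (the empty word is never invoked by the recursion).

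Given $W$ of length $2k$ or $2k+1$, write $W=PcS$ with $P=W[0,k]$ of length $k$, $c=W[k,k+1]$ a single letter, and $S=W[k+1,|W|]$, and set $W'=PS$, so that $|W'|=|W|-1$. Since $k$ is recoverable from the length, $W\mapsto(W',c)$ is a bijection from $[q]^{|W|}$ onto $[q]^{|W|-1}\times[q]$. The key observation compares bifixes: a bifix of length $j$ occupies positions $0,\dots,j-1$ and $|W|-j,\dots,|W|-1$, and the deleted position $k$ lies among these if and only if $|W|=2k$ and $j=k$. Hence, outside that lone exceptional case, $W$ has a length-$j$ bifix exactly when $W'$ does; and since the preceding lemma caps every bifix length at $\lfloor|W|/2\rfloor$, bifix-freeness of $W$ can be read off from that of $W'$.

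In the odd case $|W|=2k+1$ all bifix lengths are at most $k$, and none of them meets position $k$, so $W$ is bifix-free if and only if $W'$ is; ranging $c$ over the $q$ letters gives $a_{2k+1}=q\,a_{2k}$. In the even case $|W|=2k$ the sole exceptional length is $j=k$, and a bifix of length $k$ says precisely that the two halves of $W$ coincide, i.e. $W=P^2$. Thus $W$ is bifix-free if and only if $W'$ is bifix-free and $W$ is not a square; as there are $q\,a_{2k-1}$ words $W$ with $W'$ bifix-free, this gives $a_{2k}=q\,a_{2k-1}-N$, where $N$ counts the squares $P^2$ whose $W'$ is bifix-free.

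The crux is to show $N=a_k$. For $W=P^2$ one has $W'=P\cdot P[1,k]$, and matching its length-$j$ prefix against its length-$j$ suffix shows, for every $1\le j\le k-1$, that $W'$ has a bifix of length $j$ if and only if $P$ does; with the lemma this yields that $W'$ is bifix-free if and only if $P$ is. Hence $P\mapsto P^2$ biject the bifix-free length-$k$ words with the squares counted by $N$, so $N=a_k$ and $a_{2k}=q\,a_{2k-1}-a_k$. I expect this last equivalence—transferring the bifix structure of $P$ to the folded word $P\cdot P[1,k]$—to be the main obstacle, the odd case and the positional bookkeeping being routine.
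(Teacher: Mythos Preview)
Your proof is correct and follows essentially the same approach as the paper: both exploit the bijection between length-$n$ words and pairs consisting of a length-$(n{-}1)$ word together with a middle letter, though you frame it as deletion while the paper frames it as insertion. Your treatment of the step $N=a_k$ is in fact more explicit than the paper's, which simply asserts that the exceptional squares are precisely the squares of bifix-free words of length $k$.
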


\begin{proof}
	Fix a $q$-letter alphabet. 
	Let $W = UV$ be a bifix-free word with $|U| = \ceil{\frac{|W|}{2}}$ and $|V| = \floor{\frac{|W|}{2}}$. 
	Suppose $UaV$ has a bifix for some letter $a$. 
	Then by the lemma, $UaV$ has a bifix of length at most $|UaV|/2$. 
	But $W$ is bifix free, so the only possibility is $U = aV$.

	Therefore, for every bifix-free word of length $2k$ there are $q$ bifix-free words of length $2k+1$. 
	For every bifix-free word of length $2k-1$, there are $q$ bifix-free words of length $2k$, with exception of the the length-$2k$ words that are the square of a bifix-free word of length $k$.
\end{proof}

\begin{thm} \label{thm:IZ2}
	For $q \geq 2$,
	\begin{eqnarray*}
		\II(Z_2,q) &=& \sum_{j=0}^{\infty} \frac{(-1)^jq^{\left(1-2^{j+1}\right)}}{\prod_{k=0}^{j} \left(1 - q^{\left(1-2^{k+1}\right)}\right)}.
	\end{eqnarray*}
\end{thm}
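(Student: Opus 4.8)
The plan is to reduce the computation to the asymptotic density of bifix-free (unbordered) words and then extract that density from a functional equation satisfied by the ordinary generating function of the sequence $\langle a_\ell\rangle$. First I would record the reduction $\II(Z_2,q) = 1 - L$, where $L = \lim_{\ell\to\infty} a_\ell/q^\ell$ is the asymptotic density of bifix-free words. Every instance $ABA$ of $Z_2$ is bordered with border $A$; conversely, by the preceding Lemma a bordered word $W$ has a shortest border of length $d \le \lfloor|W|/2\rfloor$, and writing $W = A\,(\mathrm{mid})\,A$ with $|A|=d$ exhibits $W$ as a $Z_2$-instance unless $2d = |W|$, in which case $W$ is the square of a bifix-free word. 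Such squares number at most $a_{|W|/2}$, a fraction $a_{|W|/2}/q^{|W|} \le q^{-|W|/2} \to 0$ of all words, so the densities of $Z_2$-instances and of bordered words agree in the limit; this is precisely the result quoted above.

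Next I would encode the recursion for $a_\ell$ in $A(x) = \sum_{\ell\ge 0} a_\ell x^\ell$ and convert it into a functional equation. Splitting $A$ into its even- and odd-indexed parts and feeding in $a_{2k} = qa_{2k-1} - a_k$ and $a_{2k+1} = qa_{2k}$, the $a_k$ terms assemble into $A(x^2)$ while the remaining pieces telescope, yielding the Mahler-type identity
\[
A(x)(1 - qx) = qx - A(x^2),
\]
which a coefficient check at $x^1$ and $x^2$ confirms. Since $a_\ell/q^\ell \to L$, a standard Abelian theorem (the Abel mean of a convergent sequence equals its limit) gives $L = \lim_{x\to (1/q)^-}(1-qx)A(x)$; and because $1/q^2$ lies strictly inside the radius of convergence, the right-hand side tends to $q\cdot(1/q) - A(1/q^2)$. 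Hence $L = 1 - A(1/q^2)$, and therefore $\II(Z_2,q) = A(1/q^2)$.

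Finally I would expand $A$ by iterating the functional equation. Solving for $A(x)$ and repeatedly substituting $x \mapsto x^2$ produces, after $J$ steps,
\[
A(x) = \sum_{j=0}^{J}\frac{(-1)^j qx^{2^j}}{\prod_{k=0}^{j}\left(1 - qx^{2^k}\right)} + \frac{(-1)^{J+1}A\!\left(x^{2^{J+1}}\right)}{\prod_{k=0}^{J}\left(1 - qx^{2^k}\right)}.
\]
For $0 < x < 1$ the remainder vanishes as $J\to\infty$, since $x^{2^{J+1}}\to 0$, $A(0)=0$, and the product converges to a positive limit; this yields a convergent series for $A(x)$. Substituting $x = q^{-2}$, so that $qx^{2^k} = q^{1-2^{k+1}}$, turns this into exactly the claimed series for $\II(Z_2,q) = A(q^{-2})$.

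I expect the main obstacle to be the analytic bookkeeping in the middle step: justifying the passage from the coefficientwise limit $a_\ell/q^\ell \to L$ to the boundary value $\lim_{x\to(1/q)^-}(1-qx)A(x)$, and verifying that the iterated remainder genuinely tends to $0$ at the relevant point. The algebraic derivation of the functional equation and its iteration are routine once set up correctly.
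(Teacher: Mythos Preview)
Your proof is correct and shares the paper's core strategy: derive the functional equation $f(x)(1-qx) = qx - f(x^2)$ from the recursion for $a_\ell$, then iterate it to obtain the claimed series. Where you diverge is in how you link $\II(Z_2,q)$ to the value $f(1/q^2)$. The paper partitions the $Z_2$-instances of length $M$ directly by their unique shortest (hence bifix-free) border, obtaining the exact count $\sum_{\ell} a_\ell\, q^{M-2\ell}$; dividing by $q^M$ produces a partial sum of $f(1/q^2)$, so $\II(Z_2,q) = f(1/q^2)$ falls out with no analytic step beyond convergence of the series itself. Your route instead passes through the complement, writing $\II(Z_2,q) = 1 - L$ with $L = \lim a_\ell/q^\ell$, and then recovers $L = 1 - f(1/q^2)$ by applying an Abelian theorem to $(1-qx)f(x)$ at $x \to 1/q^-$ and invoking the functional equation there. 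This works, but it is more indirect: the Abel step you flag as the ``main obstacle'' is genuinely needed in your argument, whereas the paper never leaves exact finite combinatorial identities and so avoids it entirely. The paper's direct partition also scales more cleanly to the $Z_3$ computation in the next section, where the analogous ``shortest $Z_2$-bifix'' decomposition is reused.
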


\begin{proof}
%	Fix $q\geq2$.
	Since $a_\ell = a_\ell^{(q)}$ counts bifix-free words, the number of $q$-ary words of length $M$ that are $Z_2$-instances is (without double-count)
	\[\sum_{\ell=0}^{\lceil M/2 \rceil -1} a_\ell q^{M - 2\ell},\]
	so the proportion of $q$-ary words of length $M$ that are $Z_2$-instances is
	\[\frac{1}{q^M} \sum_{\ell=0}^{\lceil M/2 \rceil -1} a_\ell q^{M - 2\ell} = \sum_{\ell=0}^{\lceil M/2 \rceil -1} \frac{a_\ell}{q^{2\ell}}.\]
	Therefore $\II(Z_2,q)  = f(1/q^2)$, where $f(x) = f^{(q)}(x)$ is the generating function for $\{a_\ell\}_{\ell = 0}^\infty$:	\[f(x) = \sum_{\ell=0}^{\infty} a_\ell x^\ell.\]
	From the recursive definition of $a_\ell$, we obtain the functional equation
	\begin{eqnarray} \label{Z2func}
		f(x) = qx + qxf(x) - f(x^2).
	\end{eqnarray}
	Solving \eqref{Z2func} for $f(x)$ gives \[f(x) = \frac{qx - f(x^2)}{1-qx} = \cdots = \sum_{j=0}^{\infty} \frac{(-1)^jqx^{2^j}}{\prod_{k=0}^{j} (1 - qx^{2^k})}.\]
\end{proof}

\begin{cor} \label{cor:IZ2}
	For $q \geq 2$:
	\[\frac{1}{q} < \II(Z_2,q) < \frac{1}{q-1}.\]
	Moreover, as $q \rightarrow \infty$,
	\[\II(Z_2,q) = \frac{1}{q-1} - \frac{1+o(1)}{q^3}.\]
\end{cor}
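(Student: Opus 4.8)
The plan is to work directly from the closed form established in Theorem~\ref{thm:IZ2}. Writing $c_j = q^{1-2^{j+1}} \big/ \prod_{k=0}^{j}\bigl(1 - q^{1-2^{k+1}}\bigr)$, so that $\II(Z_2,q) = \sum_{j=0}^\infty (-1)^j c_j$, the first observation is that for $q \geq 2$ each factor $1 - q^{1-2^{k+1}}$ lies in $(0,1)$, hence every $c_j > 0$. I would then show the $c_j$ strictly decrease by computing the ratio
\[
\frac{c_{j+1}}{c_j} = \frac{q^{-2^{j+1}}}{1 - q^{1-2^{j+2}}},
\]
and noting that the numerator is at most $q^{-2} \leq 1/4$ while the denominator exceeds $7/8$, so the ratio is well below $1$. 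Thus $\sum (-1)^j c_j$ is an alternating series with positive terms decreasing to $0$, and the standard bracketing gives $S_1 < \II(Z_2,q) < S_0$, where $S_m$ denotes the $m$-th partial sum.

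The upper bound is then immediate, since $S_0 = c_0 = (1/q)/(1 - 1/q) = 1/(q-1)$. For the lower bound I would simplify $S_1 = c_0 - c_1$, using the short computation $c_1 = q \big/ \bigl((q-1)(q^3-1)\bigr)$, to obtain
\[
S_1 - \frac{1}{q} = \frac{1}{q(q-1)} - \frac{q}{(q-1)(q^3-1)} = \frac{q^3 - q^2 - 1}{q(q-1)(q^3-1)}.
\]
Because $q^3 - q^2 - 1 > 0$ for every $q \geq 2$, this yields $S_1 > 1/q$, and therefore $1/q < S_1 < \II(Z_2,q) < 1/(q-1)$, which is the claimed two-sided bound.

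For the asymptotic refinement I would keep the same bracketing: the even partial sums decrease to the limit from above and the odd ones increase from below, so the entire tail beyond $c_1$ has magnitude at most $c_2 = q^{-7}\big/\prod_{k=0}^2\bigl(1-q^{1-2^{k+1}}\bigr) = O(q^{-7})$. Expanding the two leading terms as $q \to \infty$ gives $c_1 = q\big/\bigl((q-1)(q^3-1)\bigr) = \bigl(1 + O(1/q)\bigr)/q^3$, whence
\[
\II(Z_2,q) = \frac{1}{q-1} - c_1 + O(q^{-7}) = \frac{1}{q-1} - \frac{1 + o(1)}{q^3}.
\]
The only step requiring genuine care is verifying that the alternating-series hypotheses hold for \emph{all} $q \geq 2$ rather than merely asymptotically; the explicit ratio computation settles this, after which the bounds and the expansion are routine algebra.
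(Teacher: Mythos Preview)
Your argument is correct and matches the paper's approach almost exactly: both identify $\II(Z_2,q)$ as an alternating series with strictly decreasing positive terms (the paper relegates the monotonicity check to an appendix lemma, whereas you give the ratio bound inline), then read off the upper bound $S_0 = 1/(q-1)$ and the asymptotic expansion from the first few partial sums.

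The one point of divergence is the lower bound $1/q$. The paper obtains it by a one-line combinatorial observation independent of the series: any word whose first and last letters agree is already a $Z_2$-instance, and this event has probability $1/q$. You instead stay inside the analytic framework and show $S_1 > 1/q$ via the identity $S_1 - 1/q = (q^3 - q^2 - 1)/\bigl(q(q-1)(q^3-1)\bigr)$. Both are valid; the paper's route is shorter and gives the bound a transparent interpretation, while yours has the virtue of being self-contained within the alternating-series machinery and avoids invoking any extra structural fact about $Z_2$-instances.
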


\begin{proof}
	The lower bound follows from the fact that a word of length $M>2$ is a $Z_2$-instance when the first and last character are the same. 
	This occurrence has probability $1/q$. 
	Note that $f^{(q)}(q^{-2})$ is an alternating series.
	Moreover, the terms in absolute value are monotonically approaching 0; the routine proof of monotonicity can be found in the appendices  (Lemma~\ref{lemF}).
	Hence, the partial sums provide successively better upper and lower bounds: 
	%\begin{eqnarray*}
	%	f^q\left(\frac{1}{q^2}\right) & = & \sum_{j=0}^{\infty} \frac{(-1)^jq\left(\frac{1}{q^2}\right)^{2^j}}{\prod_{k=0}^{j} \left(1 - q\left(\frac{1}{q^2}\right)^{2^k}\right)}\\
	%	 & = & \sum_{j=0}^{\infty} \frac{(-1)^j\left(q^{1-2^{j+1}}\right)}{\prod_{k=0}^{j} \left(1 - \left(q^{1-2^{k+1}}\right)\right)}\\
	%	& < & \sum_{j=0}^{0} \frac{(-1)^j\left(q^{1-2^{j+1}}\right)}{\prod_{k=0}^{j} \left(1 - \left(q^{1-2^{k+1}}\right)\right)}\\
	%	& = & \frac{1/q}{1-1/q} \\
	%	& = & \frac{1}{q-1}.
	%\end{eqnarray*}
	%Moreover,
	\begin{eqnarray*}
		%f^{(q)}\left(\frac{1}{q^2}\right) & = & \sum_{j=0}^{\infty} \frac{(-1)^jq\left(\frac{1}{q^2}\right)^{2^j}}{\prod_{k=0}^{j} \left(1 - q\left(\frac{1}{q^2}\right)^{2^k}\right)}\\
		 f^{(q)}\left(\frac{1}{q^2}\right) & = & \sum_{j=0}^{\infty} \frac{(-1)^j\left(q^{1-2^{j+1}}\right)}{\prod_{k=0}^{j} \left(1 - \left(q^{1-2^{k+1}}\right)\right)};\\
		\\
		f^{(q)}\left(\frac{1}{q^2}\right) &>& \sum_{j=0}^{1} \frac{(-1)^j\left(q^{1-2^{j+1}}\right)}{\prod_{k=0}^{j} \left(1 - \left(q^{1-2^{k+1}}\right)\right)}\\
		& = &  \frac{1/q}{1-1/q} - \frac{1/q^3}{(1 - 1/q)(1 - 1/q^3)}\\
	%	& = & \frac{1}{q-1} - \frac{1}{(1-1/q)(q^3-1)}\\
		& = & \frac{1}{q-1} - \frac{1+o(1)}{q^3};\\
		\\
		f^{(q)}\left(\frac{1}{q^2}\right) & < & \sum_{j=0}^{2} \frac{(-1)^jq\left(\frac{1}{q^2}\right)^{2^j}}{\prod_{k=0}^{j} \left(1 - q\left(\frac{1}{q^2}\right)^{2^k}\right)}\\
		& = &\frac{1}{q-1} -\frac{1+o(1)}{q^3} + \frac{1/q^5}{(1 - 1/q)(1 - 1/q^3)(1 - 1/q^5)}\\
		& = &\frac{1}{q-1} -\frac{1+o(1)}{q^3} +\frac{O(1)}{q^5}.%\\
	%	& = &\frac{1}{q-1} -\Omega \left(\frac{1}{q^3}\right).
	\end{eqnarray*}
\end{proof}

%The following table lists approximate values of $\II(Z_2,q)$ for $q \leq 8$.
\begin{table}[ht]
\centering

	\caption{Approximate values of $\II(Z_2,q)$ for $2 \leq q \leq 8$.}

	\def\arraystretch{1.3}
	\begin{tabular}{c | c c c c c c c c c}

%		$\begin{array}{c }
			$q$ & 2 & 3 & 4 & 5 & 6 & 7 & 8 \\ \hline % \Gape[15pt][0pt]{}
			$q^{-1}$ & 0.50000 & .33333 & .25000 & .20000 & .16667 & .14286 & .12500\\ \hline
			$\II(Z_2,q)$ & 0.73221 & .44302 & .31225 & .23994 & .19442 & .16326 & .14062\\ \hline
			$(q-1)^{-1} - q^{-3}$ & 0.87500 & .46296 & .31771 & .24200 & .19537 &.16375 &.14090\\ \hline
			$(q-1)^{-1}$ & 1.00000 & .50000 & .33333 & .25000 & .20000 & .16667 & .14286\\
%		\end{array}$
	\end{tabular}
%	\begin{tablenotes}[flushleft]
%		\item Values for $\II(Z_2,q)$ are calculated using Theorem~\ref{thm:IZ2}. \\
%		\item The other rows allow for comparison with the bounds given in Corollary~\ref{cor:IZ2}.
%	\end{tablenotes}

\end{table}

%\[\begin{array}{c | c c c c c c c c c}
%	q & 2 & 3 & 4 & 5 & 6 & 7 & 8 \\ \hline
%	1/q & 0.500000 & .333333 & .250000 & .200000 & .166667 & .142857 & .125000\\ \hline
%	\II(Z_2,q) & 0.732213 & .443020 & .312252 & .239935 & .194423 & .163257 & .140621\\ \hline
%	1/(q-1) - 1/q^3 & 0.875000 & .462963 & .317708 & .242000 & .195370 &.163751 &.140904\\ \hline
%	1/(q-1) & 1.000000 & .500000 & .333333 & .250000 & .200000 & .166667 & .142857\\
%\end{array}\]

%\subsection{aab and abb}
%
%By symmetry, $\II_n(aab,[q]) = \II_n(abb,[q]$ for every $n$ and $q$.
%Let $b_k$ be the number of length $k$ words $U$ such that $UU$ has no proper square prefix; that is, there is no proper prefix $V$ of $U$ such that $VV$ is a prefix of $UU$.
%
%Suppose $U$ has no square prefix but $UU$ has a proper square prefix: $VV$ with $|VV| > |U|$.
%
%[I think this should have the same asymptotic density as aba]

%%%%%%%%%%%%%%%%%%%%%%%%%%%%%%%%%%%%%%%%%%%%%%%%%%%%%
\section{Calculating \texorpdfstring{$\II(Z_3,q)$}{the asymptotic instance probability of Z3}} \label{IZ3}

Will use similar methods to compute $\II(Z_3,q)$. 
To avoid unnecessary subscripts and superscripts, assume throughout this section that we are using a fixed alphabet with $q>1$ letters, unless explicitly stated otherwise.
Since $Z_2$ has more interesting structure than $Z_1$, there are more cases to consider in developing the necessary recursion. 

\begin{lem}
\label{P3cases}
	Fix bifix-free word $L$. 
	Let $W = LAL$ be a $Z_2$-instance with a $Z_2$-bifix. Then $LAL$ can be written in exactly one of the following ways:
	\begin{enumerate}[$\<$i$\>$]
		\item $LAL = LBLCLBL$ with $LBL$ the shortest $Z_2$-bifix of $W$ and $|C|>0$;
		\item $LAL = LBLLBL$ with $LBL$ the shortest $Z_2$-bifix of $W$;
		\item $LAL = LBLBL$ with $LBL$ the shortest $Z_2$-bifix of $W$;
		\item $LAL = LLFLLFLL$ with $LLFLL$ the shortest $Z_2$-bifix of $W$;
		\item $LAL = LLLL$.
	\end{enumerate}
\end{lem}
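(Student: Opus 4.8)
The plan is to reduce everything to the geometry of a single object --- the shortest $Z_2$-bifix $P$ of $W$ --- and to read the five forms off the way its prefix-occurrence and its suffix-occurrence sit inside $W$. First I would record the consequences of the hypotheses. Since $L$ is bifix-free and $W=LAL$, the word $L$ is forced to be the shortest bifix of $W$: any shorter bifix would be a proper prefix and suffix of $L$. As $W$ is a $Z_2$-instance it cannot equal $LL$ (a border $X$ of $LL$ with $2|X|<|LL|$ would be a bifix of $L$), so $A\neq\varepsilon$. Let $P$ be the shortest $Z_2$-bifix, which is unique because a border is determined by its length. I would then show $P=LBL$ with $B\neq\varepsilon$: one has $|P|>|L|$ (otherwise $P=L$ is bifix-free, not a $Z_2$-instance), so $L$ is a proper border of $P$ and, by minimality, its shortest; the lemma that a bordered word has a bifix of length at most half its length gives $2|L|\le|P|$, and $B=\varepsilon$ is excluded exactly as $A=\varepsilon$ was. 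Finally $P$ is $Z_2$-bifix-free, since any $Z_2$-bifix of $P$ would be a $Z_2$-bifix of $W$ shorter than $P$.

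Next I would place the prefix- and suffix-occurrences of $P$ and split on overlap. If $2|P|\le|W|$ they are disjoint or abutting and $W=P\,C\,P$ with $C=W[\,|P|,|W|-|P|\,]$; then $|C|>0$ gives form (i), $W=LBLCLBL$, and $C=\varepsilon$ gives $W=PP=LBLLBL$, form (ii). If $2|P|>|W|$ the two occurrences overlap in a block of length $o=2|P|-|W|>0$; this block is at once a suffix and a prefix of $P$, so $P$ has a proper border $R$ with $|R|=o$.

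The hard part will be showing $o\in\{|L|,2|L|\}$, and it is exactly here that both minimality properties are spent. Because $P$ is $Z_2$-bifix-free, $R$ is not a $Z_2$-instance, so every nonempty border of $R$ has length at least $|R|/2$; since any border of $R$ is also a border of $P$ and hence at least $|L|$, this gives $o\le2|L|$ (trivially if $o=|L|$, and otherwise because the shortest border $L$ of $R$ satisfies $|L|\ge o/2$). If $|L|<o<2|L|$ then $R$ starts and ends with $L$ and those two copies overlap by $2|L|-o>0$, endowing $L$ with a nonempty border --- impossible. Hence $o=|L|$ or $o=2|L|$. When $o=|L|$ the overlap $R$ equals $L$, and the recurrence of $P$ at shift $|W|-|P|=|L|+|B|$ forces $W=LBLBL$, form (iii). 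When $o=2|L|$ the overlap $R$ equals $LL$, so $P=LBL$ starts and ends with $LL$ and thus $B$ starts and ends with $L$; bifix-freeness of $L$ again forbids $|L|<|B|<2|L|$, leaving either $B=L$, whence $P=LLL$ and $W=LLLL$ (form (v)), or $|B|\ge2|L|$, whence $B=LFL$ with $F\neq\varepsilon$ (the value $F=\varepsilon$ would make $LLL$ a $Z_2$-bifix shorter than $P=LLLL$), giving $P=LLFLL$ and $W=LLFLLFLL$ (form (iv)).

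Finally I would verify the ``exactly one'' clause: each outcome is pinned to quantities intrinsic to $W$ --- the separation or overlap of the two copies of the unique shortest $Z_2$-bifix, and, in the overlap-$2|L|$ case, whether $B=L$ --- and these alternatives are mutually exclusive, so no $W$ satisfies two descriptions. The decisive and most delicate step remains the overlap analysis, where $Z_2$-bifix-freeness of $P$ caps $o$ at $2|L|$ from above while bifix-freeness of $L$ deletes every value strictly between $|L|$ and $2|L|$; together they collapse the a priori many overlaps to just two admissible values and rigidly fix the periodic shapes $LLLL$ and $LLFLLFLL$.
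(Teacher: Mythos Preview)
Your argument is correct and rests on the same idea as the paper --- take the shortest $Z_2$-bifix $P=LBL$ and analyze how its prefix- and suffix-occurrences sit inside $W$ --- but your organization is tighter. The paper runs a nine-way case split on $k=|P|$ against $m=|W|$ and $\ell=|L|$, eliminating five of the cases by separate contradictions; you instead work with the single overlap parameter $o=2|P|-|W|$ and pin it to $\{\ell,2\ell\}$ via one observation: the overlap word $R$ is a border of the $Z_2$-bifix-free word $P$, so $R$ is not a $Z_2$-instance and hence every border of $R$ has length at least $|R|/2$. Applying this to the border $L$ of $R$ caps $o$ at $2\ell$ in one stroke, which absorbs what the paper handles piecemeal in its Cases~(7) and~(8). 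Two spots deserve one extra clause: you never explicitly rule out $o<\ell$ (it follows at once since $R$ is itself a border of $P$ and you have already shown $L$ is the shortest such), and in the $o=2\ell$ branch the step ``$B$ starts and ends with $L$'' presupposes $|B|\ge\ell$, which is one more appeal to bifix-freeness of $L$ (the two copies of $L$ inside $BL$ would otherwise overlap).
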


\begin{proof}
	With some thought, the reader should recognize that the five listed cases are in fact mutually exclusive. 
The proof that these are the only possibilities follows.

	Given that $W$ has a $Z_2$-bifix and $L$ is bifix-free, it follows that $W$ has a $Z_2$-bifix $LBL$ for some nonempty $B$. 
Let $LBL$ be chosen of minimal length. We break this proof into nine cases depending on the lengths of $L$ and $LBL$ (Figure~\ref{overlap}). 
Set $m = |W|$, $\ell = |L|$, and $k = |LBL|$.

	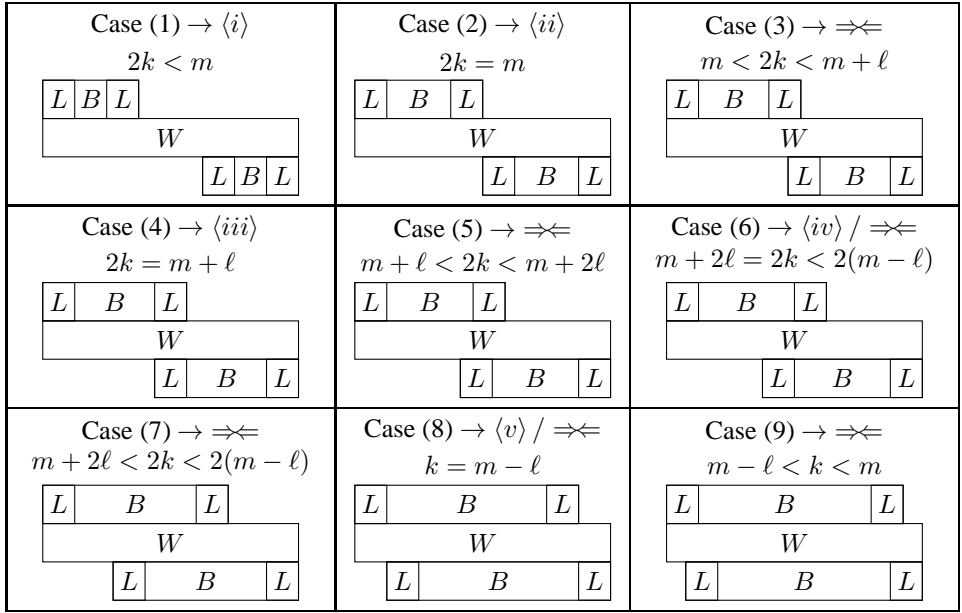
\begin{figure}[ht] 
	\centering

		\begin{tabular}{| c | c | c |} \hline

		\begin{tikzpicture}[scale=.85]
			\draw (0,0) rectangle node{$W$} (4,.6);
			\draw (2,1.7) node[above]{Case (1) $\rightarrow \ang{i}$};
			\draw (2,1.2) node[above]{$2k < m$};
		
			\draw (0,1.2) rectangle node{$B$} (1.5,.6);
			\draw (0,.6) rectangle node{$L$} (.5,1.2);
			\draw (1,1.2) rectangle node{$L$} (1.5,.6);
		
			\draw (4,0) rectangle node{$B$} (2.5,-.6);
			\draw (3,-.6) rectangle node{$L$} (2.5,0);
			\draw (4,0) rectangle node{$L$} (3.5,-.6);
		
		\end{tikzpicture}
& 
		\begin{tikzpicture}[scale=.85]
%			\begin{scope}[xshift=5cm]
				\draw (0,0) rectangle node{$W$} (4,.6);
				\draw (2,1.7) node[above]{Case (2) $\rightarrow \ang{ii}$};
				\draw (2,1.2) node[above]{$2k = m$};
			
				\draw (0,1.2) rectangle node{$B$} (2,.6);
				\draw (0,.6) rectangle node{$L$} (.5,1.2);
				\draw (2,1.2) rectangle node{$L$} (1.5,.6);
			
				\draw (4,0) rectangle node{$B$} (2,-.6);
				\draw (2,-.6) rectangle node{$L$} (2.5,0);
				\draw (4,0) rectangle node{$L$} (3.5,-.6);
%			\end{scope}
		\end{tikzpicture}
&		
		\begin{tikzpicture}[scale=.85]
%			\begin{scope}[xshift=10cm]
				\draw (0,0) rectangle node{$W$} (4,.6);
				\draw (2,1.7) node[above]{Case (3) $\rightarrow \; \Rightarrow\!\Leftarrow$};
				\draw (2,1.2) node[above]{$m < 2k < m + \ell$};
			
				\draw (0,1.2) rectangle node{$B$} (2.1,.6);
				\draw (0,.6) rectangle node{$L$} (.5,1.2);
				\draw (2.1,1.2) rectangle node{$L$} (1.6,.6);
			
				\draw (4,0) rectangle node{$B$} (1.9,-.6);
				\draw (1.9,-.6) rectangle node{$L$} (2.4,0);
				\draw (4,0) rectangle node{$L$} (3.5,-.6);
%			\end{scope}
		\end{tikzpicture}
\\ \hline
		\begin{tikzpicture}[scale=.85]
%			\begin{scope}[yshift=-3.5cm]
				\draw (0,0) rectangle node{$W$} (4,.6);
				\draw (2,1.7) node[above]{Case (4) $\rightarrow \ang{iii}$};
				\draw (2,1.2) node[above]{$2k = m + \ell$};
			
				\draw (0,1.2) rectangle node{$B$} (2.25,.6);
				\draw (0,.6) rectangle node{$L$} (.5,1.2);
				\draw (2.25,1.2) rectangle node{$L$} (1.75,.6);
			
				\draw (4,0) rectangle node{$B$} (1.75,-.6);
				\draw (1.75,-.6) rectangle node{$L$} (2.25,0);
				\draw (4,0) rectangle node{$L$} (3.5,-.6);
%			\end{scope}
		\end{tikzpicture}
&		
		\begin{tikzpicture}[scale=.85]
%			\begin{scope}[xshift=5cm, yshift=-3.5cm]
				\draw (0,0) rectangle node{$W$} (4,.6);
				\draw (2,1.7) node[above]{Case (5) $\rightarrow \; \Rightarrow\!\Leftarrow$};
				\draw (2,1.2) node[above]{$m + \ell < 2k < m + 2\ell$};
			
				\draw (0,1.2) rectangle node{$B$} (2.35,.6);
				\draw (0,.6) rectangle node{$L$} (.5,1.2);
				\draw (2.35,1.2) rectangle node{$L$} (1.85,.6);
			
				\draw (4,0) rectangle node{$B$} (1.65,-.6);
				\draw (1.65,-.6) rectangle node{$L$} (2.15,0);
				\draw (4,0) rectangle node{$L$} (3.5,-.6);
%			\end{scope}
		\end{tikzpicture}
&		
		\begin{tikzpicture}[scale=.85]
%			\begin{scope}[xshift=10cm, yshift=-3.5cm]
				\draw (0,0) rectangle node{$W$} (4,.6);
				\draw (2,1.7) node[above]{Case (6) $\rightarrow \ang{iv}/\Rightarrow\!\Leftarrow$};
				\draw (2,1.2) node[above]{$m + 2\ell = 2k < 2(m - \ell)$};
			
				\draw (0,1.2) rectangle node{$B$} (2.5,.6);
				\draw (0,.6) rectangle node{$L$} (.5,1.2);
				\draw (2.5,1.2) rectangle node{$L$} (2,.6);
			
				\draw (4,0) rectangle node{$B$} (1.5,-.6);
				\draw (1.5,-.6) rectangle node{$L$} (2,0);
				\draw (4,0) rectangle node{$L$} (3.5,-.6);
%			\end{scope}
		\end{tikzpicture}
\\ \hline
		\begin{tikzpicture}[scale=.85]
%			\begin{scope}[yshift=-7cm]
				\draw (0,0) rectangle node{$W$} (4,.6);
				\draw (2,1.7) node[above]{Case (7) $\rightarrow \; \Rightarrow\!\Leftarrow$};
				\draw (2,1.2) node[above]{$m + 2\ell < 2k < 2(m - \ell)$};
			
				\draw (0,1.2) rectangle node{$B$} (2.9,.6);
				\draw (0,.6) rectangle node{$L$} (.5,1.2);
				\draw (2.9,1.2) rectangle node{$L$} (2.4,.6);
			
				\draw (4,0) rectangle node{$B$} (1.1,-.6);
				\draw (1.1,-.6) rectangle node{$L$} (1.6,0);
				\draw (4,0) rectangle node{$L$} (3.5,-.6);
%			\end{scope}
		\end{tikzpicture}
&		
		\begin{tikzpicture}[scale=.85]
%			\begin{scope}[xshift=5cm, yshift=-7cm]
				\draw (0,0) rectangle node{$W$} (4,.6);
				\draw (2,1.7) node[above]{Case (8) $\rightarrow \ang{v} / \Rightarrow\!\Leftarrow$};
				\draw (2,1.2) node[above]{$k = m - \ell$};
			
				\draw (0,1.2) rectangle node{$B$} (3.5,.6);
				\draw (0,.6) rectangle node{$L$} (.5,1.2);
				\draw (3,1.2) rectangle node{$L$} (3.5,.6);
			
				\draw (4,0) rectangle node{$B$} (.5,-.6);
				\draw (.5,-.6) rectangle node{$L$} (1,0);
				\draw (4,0) rectangle node{$L$} (3.5,-.6);
%			\end{scope}
		\end{tikzpicture}
&		
		\begin{tikzpicture}[scale=.85]
%			\begin{scope}[xshift=10cm, yshift=-7cm]
				\draw (0,0) rectangle node{$W$} (4,.6);
				\draw (2,1.7) node[above]{Case (9) $\rightarrow \; \Rightarrow\!\Leftarrow$};
				\draw (2,1.2) node[above]{$m - \ell < k < m$};
			
				\draw (0,1.2) rectangle node{$B$} (3.7,.6);
				\draw (0,.6) rectangle node{$L$} (.5,1.2);
				\draw (3.7,1.2) rectangle node{$L$} (3.2,.6);
			
				\draw (4,0) rectangle node{$B$} (.3,-.6);
				\draw (.3,-.6) rectangle node{$L$} (.8,0);
				\draw (4,0) rectangle node{$L$} (3.5,-.6);
%			\end{scope}
		\end{tikzpicture}
\\ \hline
		\end{tabular}

		\caption[All possible ways the minimal $Z_2$-bifix of a word can overlap.]{All possible ways the minimal $Z_2$-bifix of $W$ can overlap, with $m = |W|$, $\ell = |L|$, and $k = |LBL|$} \label{overlap}
%		\begin{tablenotes}[flushleft]
%			\item $m = |W|$, $\ell = |L|$, and $k = |LBL|$.
%		\end{tablenotes}

	\end{figure}

	\begin{enumerate}[\text{Case} (1):]
		\item $2k < m$. This is $\< i\>$.
		\item $2k = m$. This is $\<ii\>$.
		\item $m < 2k < m + \ell$. In $LAL$, the first and last occurrences of $LBL$ overlap by a length strictly between $0$ and $\ell$. This is impossible, since $L$ is bifix-free.
		\item $2k = m + \ell$. This is $\<iii\>$
		\item $m + \ell < 2k < m + 2\ell$. The first and last occurrences of $LBL$ overlap by a length strictly between $\ell$ and $2\ell$. This is impossible, since $L$ is bifix-free.
		\item $m + 2\ell = 2k < 2(m - \ell)$. $LAL = L(DL)(LE)L$ where $DL = B = LE$. Thus $L$ is a bifix of $B$, so $LAL = LLFLLFLL$ where $B = LFL$. If $|F|>0$, this is $\<iv\>$. If $|F|=0$, then $LAL = LLLLLL$. But this contradicts the minimality of $LBL$, since $LLLLLL$ has $Z_2$-bifix $LLL$, which is shorter than $LBL = LLLL$.
		\item $m + 2\ell < 2k < 2(m - \ell)$. $LAL = LDLELD'L$ where $DLE = B = ELD'$. 
		Since $EL$ is a prefix of $B$, $LEL$ is a prefix of $LAL$. 
		Likewise, since $LE$ is a suffix of $B$, $LEL$ is a suffix of $LAL$.
		Therefore, $LEL$ is a bifix of $LAL$ and $|LEL| < |LDLEL| = |LBL|$, contradicting the minimality of $LBL$.
		\item $k = m - \ell$. $LAL = LLCLL$ where $LC = B = CL$. If $|C|=0$, this is $\<v\>$. Otherwise, $LCL$ is a bifix of $LAL$, contradicting the minimality of $LBL$.
		\item $m - \ell < k < m$. The first and last occurrences of $LBL$ overlap by a length strictly between $k-\ell$ and $k$. This is impossible, since $L$ is bifix-free.
	\end{enumerate}
\end{proof}

For fixed bifix-free word $L$ of length $\ell$, define $b_m^\ell$ to count the number of $Z_2$ words with bifix $L$ that are $Z_2$-bifix-free $q$-ary words of length $m$. 
Then 
\begin{equation} \label{eqn:IZ3}
\II(Z_3,q) = \sum_{\ell = 1}^{\infty} \left( a_\ell \sum_{m = 1}^\infty b_m^\ell q^{-2m} \right).
\end{equation}

In order to form a recursive definition of $b_n$ as we did for $a_n$, we now describe two new terms. Let $AB$ be a word of length $W$ with $|A| = \ceil{W/2}$ and $|B| = \floor{W/2}$. Then $AB$ has $q$ length-$(n+1)$ \textit{children} of the form $AxB$, each having $AB$ as its \textit{parent}. In this way every nonempty word has exactly $q$ children and exactly 1 parent, which establishes the 1:$q$ ratio of words of length $n$ to words of length $n+1$. The set of a word's children together with successive generations of progeny we refer to as that word's \textit{descendants}.

\begin{thm} \label{P3}
	$b_n^\ell = c_n^\ell + d_n^\ell$ where $c_n=c_n^\ell$ and $d_n=d_n^\ell$ are defined recursively as follows:
	\begin{eqnarray*}
		\text{For even }\ell:\\
		c_1 = \cdots = c_{2\ell} & = & 0,\\
		c_{2\ell+1} & = & q,\\
		c_{4\ell} & = & qc_{4\ell-1} - (c_{5\ell/2} + 1),\\
		c_{5\ell} & = & qc_{5\ell - 1} - (c_{5\ell/2} + c_{3\ell} - 1),\\
		c_{5\ell+1} & = & q(c_{5\ell} + c_{3\ell} - 1),\\
		c_{6\ell} & = & qc_{6\ell-1} - (c_{3\ell} - 1 + c_{5\ell/2});\\
		c_{2k} & = & qc_{2k-1} - (c_k + c_{k + \ell/2}) \text{ for } k>\ell,k \not\in\{2\ell,5\ell/2,3\ell\},\\
		c_{2k+1} & = & q(c_{2k} + c_{k + \ell/2}) \text{ for } k>\ell, k \neq 5\ell/2,\\
%	\end{eqnarray*}
%\\
%	\begin{eqnarray*}
		d_1 = \cdots = d_{4\ell} & = & 0,\\
		d_{4\ell+1} & = & q,\\
		d_{5\ell} & = & qd_{5\ell-1} - 1,\\
		d_{5\ell+1} & = & q(d_{5\ell} + 1),\\
		d_{6\ell} & = & qd_{6\ell-1} - 1,\\
		d_{2k} & = & qd_{2k-1} - (d_k + d_{k+\ell} + d_{k + \ell/2}) \text{ for } k>2\ell,k \not\in \{5\ell/2,3\ell\},\\
		d_{2k+1} & = & q(d_{2k} + d_{k+\ell} + d_{k + \ell/2}) \text{ for } k\geq 2\ell, k \neq 5\ell/2.\\
%	\end{eqnarray*}
%\\
%	\begin{eqnarray*}
		\text{For odd }\ell>1:\\
		c_1 = \cdots = c_{2\ell} & = & 0,\\
		c_{2\ell+1} & = & q,\\
		c_{4\ell} & = & q\left(c_{4\ell-1} + c_{\floor{\frac{5\ell}{2}}}\right) - (c_{2\ell} +1),\\
		c_{5\ell} & = & qc_{5\ell - 1} - (c_{3\ell} - 1),\\
		c_{5\ell+1} & = & q(c_{5\ell} + c_{3\ell} - 1) - c_{\ceil{\frac{5\ell}{2}}},\\
		c_{6\ell} & = & q\left(c_{6\ell-1} + c_{\floor{\frac{7\ell}{2}}}\right) - (c_{3\ell} -1),\\
		c_{2k} & = & q\left(c_{2k-1} + c_{k+\floor{\frac{\ell}{2}}}\right) - c_k; k>\ell,k \not\in \left\{2\ell,\ceil{\frac{\ell}{2}},3\ell\right\},\\
		c_{2k+1} & = & qc_{2k} - c_{k+\ceil{\frac{\ell}{2}}}; k>\ell,k \neq \floor{\frac{5\ell}{2}};\\
	\end{eqnarray*}
\\
	\begin{eqnarray*}
		d_1 = \cdots = d_{4\ell} & = & 0,\\
		d_{4\ell+1} & = & q,\\
		d_{5\ell} & = & qd_{5\ell-1} - 1,\\
		d_{5\ell+1} & = & q(d_{5\ell} + 1),\\
		d_{6\ell} & = & qd_{6\ell-1} - 1,\\
		d_{2k} & = & q\left(d_{2k-1}+ d_{k + \floor{\frac{\ell}{2}}}\right) - (d_k + d_{k+\ell}); k>2\ell,k\not\in \left\{\ceil{\frac{5\ell}{2}},3\ell\right\},\\
		d_{2k+1} & = & q\left(d_{2k} + d_{k+\ell}\right) -  d_{k + \ceil{\frac{\ell}{2}}}; k> 2\ell, k  \neq \floor{\frac{5\ell}{2}}.\\
%	\end{eqnarray*}
%\\
%	\begin{eqnarray*}
		\text{For }\ell=1:\\
		c_1 = c_1 = c_2 & = & 0,\\
		c_3 & = & q,\\
		c_4 & = & qc_3 - 1,\\
		c_5 & = & qc_4 - (c_3 - 1),\\
		c_{6} & = & q(c_5 + c_3 - 1) - (c_3 - 1),\\
		c_{2k} & = & q(c_{2k-1} + c_k) - c_k; k>3,\\
		c_{2k+1} & = & qc_{2k} - c_{k+1}; k>2;\\
%		\\
		d_1 = d_2 = d_3 = d_4 & = & 0,\\
		d_5 & = & q - 1,\\
		d_{6} & = & q(d_5+1) - 1,\\
		d_{2k} & = & q(d_{2k-1}+ d_k) - (d_k + d_{k+1}); k>3,\\
		d_{2k+1} & = & q(d_{2k} + d_{k+1}) -  d_{k + 1}; k> 2.
	\end{eqnarray*}

\end{thm}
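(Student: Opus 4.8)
The plan is to read off the combinatorial meaning that \eqref{eqn:IZ3} forces on $b_n^\ell$: it is the number of $q$-ary words $W$ of length $n$ that are $Z_2$-instances, that have the fixed bifix-free word $L$ (of length $\ell$) as a bifix, and that are $Z_2$-bifix-free. Because $L$ is bifix-free, any bifix of $W$ shorter than $L$ would be a bifix of $L$, so $L$ is automatically the \emph{minimal} bifix of $W$ and we may write $W=LAL$ with the two displayed copies of $L$ non-overlapping. A first, structural, claim hidden in the statement is that $b_n^\ell$ depends only on $\ell$ and $q$, not on the particular $L$; this is exactly what bifix-freeness buys, since it rules out all nontrivial self-overlaps of $L$ (precisely the "impossible" cases $3,5,7,9$ of Lemma~\ref{P3cases}). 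With the minimal bifix pinned at $L$ and preserved under the parent/child (center-letter) operation, the only thing a recursion must track is the creation and destruction of $Z_2$-bifixes, and I would build it on the same parent/child scaffolding used for $a_\ell$ in Theorem~\ref{thm:IZ2}: a length-$(2k-1)$ word has $q$ children of length $2k$, a length-$2k$ word has $q$ children of length $2k+1$, and the inserted letter sits at the exact center. As with bifixes, a $Z_2$-bifix of length at most half the word lives entirely in the outer halves and is shared between parent and child, \emph{except} when its length is near half the word, where the central letter can complete a new $Z_2$-bifix or break an existing one.

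The engine of the case analysis is Lemma~\ref{P3cases}: every $W=LAL$ that \emph{does} carry a $Z_2$-bifix has its minimal $Z_2$-bifix in exactly one of the five canonical shapes $LBLCLBL$, $LBLLBL$, $LBLBL$, $LLFLLFLL$, $LLLL$. I would split $b_n^\ell=c_n^\ell+d_n^\ell$ by whether $W$ is of the doubled form $LLA'LL$: let $c_n^\ell$ count the $Z_2$-bifix-free words $W=LAL$ that are \emph{not} of the form $LLA'LL$, and $d_n^\ell$ those that are. This is the dichotomy that makes the two recurrences close on themselves, because a "bad" parent or child on the $c$-side has minimal $Z_2$-bifix of the undoubled shape $LBL$, while on the $d$-side it has a doubled shape $LLFLL$; it also pins the onset lengths, the smallest $c$-word being $LxL$ (whence $c_{2\ell+1}=q$) and the smallest $d$-word being $LLxLL$ (whence $d_{4\ell+1}=q$), with every shorter value zero.

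From this viewpoint the correction terms are transcriptions of bifix gains and losses. A subtracted $c_k$, $c_{k+\ell/2}$, $d_k$, $d_{k+\ell}$, or $d_{k+\ell/2}$ counts children that \emph{acquire} a new minimal $Z_2$-bifix through the central insertion (the square-type completion, exactly as in the step $a_{2k}=qa_{2k-1}-a_k$), while a term added inside a factor $q(\cdots)$ counts good children whose parent instead \emph{carries} a $Z_2$-bifix that the central insertion destroys; each such family is in bijection with a shorter $c$- or $d$-word via the $LBL$ or $LLFLL$ decomposition, the index being the length of $B$ (or $F$) padded by whole or half copies of $L$, which is precisely why the shifts by $\ell$ and $\ell/2$ appear. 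I would then grind out the bookkeeping length by length, organized by where the half-length sits relative to the thresholds $2\ell,\,5\ell/2,\,3\ell,\,4\ell,\,5\ell,\,6\ell$ at which the minimal $Z_2$-bifix migrates between the five shapes; these are exactly the lengths given their own lines in the statement, and the isolated $\pm1$'s there absorb the degenerate self-overlapping instances $LLLL$ (length $4\ell$) and $LLLLLL$ (length $6\ell$) that Lemma~\ref{P3cases} excises by minimality. The parity of $\ell$ must be separated because the center of a length-$2k$ window lands on a letter boundary of an $L$-block only when $\ell$ is even (so that $\ell/2,\,5\ell/2,\,7\ell/2$ are integers); for odd $\ell$ the center falls between letters and a single index splits into the neighboring $\lfloor 5\ell/2\rfloor,\lceil 5\ell/2\rceil$ and $\lfloor \ell/2\rfloor,\lceil \ell/2\rceil$, and the case $\ell=1$ degenerates further since several thresholds coincide, so it is handled on its own.

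The main obstacle is this last matching: proving that at each threshold the number of newly created versus newly destroyed minimal $Z_2$-bifixes equals the \emph{specific} shifted index written in the statement, for instance that the destroyed-bifix contribution at an odd step is enumerated by $c_{k+\ell/2}$ (or $c_{k+\lceil\ell/2\rceil}$) rather than by a neighboring value. This requires a careful, essentially case-by-case, reprise of the nine-way overlap argument of Lemma~\ref{P3cases}, localized at the center letter, tracking which of the five canonical shapes is gained or lost as the length crosses each of $2\ell,5\ell/2,3\ell,4\ell,5\ell,6\ell$ and checking that the exceptional equations (those for $c_{4\ell},c_{5\ell},c_{5\ell+1},c_{6\ell}$ and their $d$-analogues) differ from the generic recurrence by exactly the $\pm1$ coming from $LLLL$ and $LLLLLL$. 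The parity split and the $\ell=1$ degeneracy multiply the number of such verifications, and the chief risk throughout is an off-by-one in the index shifts; once each transition is matched, the rest is routine parent/child counting already validated for $a_\ell$.
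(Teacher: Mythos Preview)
Your proposal is correct and follows essentially the same route as the paper: the same $c/d$ split by whether the word has the doubled form $LLA'LL$, the same parent/child scaffolding inherited from the $a_\ell$ recursion, and the same appeal to the five overlap shapes of Lemma~\ref{P3cases} to generate the subtracted and re-added correction terms, with the parity of $\ell$ and the $\ell=1$ collision handled separately. The one small point you will want to add when you carry out the threshold bookkeeping is that the $\pm1$ adjustments at $5\ell$ and $5\ell+1$ (not just at $4\ell$ and $6\ell$) come from the word $LLLLL$, which is the case-$\langle iii\rangle$ degeneration $LBLBL$ with $B=L$: it has $LL$ as a bifix yet is not of the form $LLA'LL$, so it gets removed from $c$ when it should have been removed from $d$, and this must be transferred by hand exactly as you describe for $LLLLLL$.
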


\begin{proof}
	Fix a bifix-free word $L$ of length $\ell$.
	The full recursion is too messy to prove all at once, so we build up to it in stages.
	Within each stage, $\approx$ indicates an incomplete definition.
	Example word trees with small $q$ and short $L$ are found in Appendix~\ref{TREES}.

	\textbf{Stage I}\\
	Since $L$ is bifix free, any $Z_2$-instance with $L$ as a bifix has to be of greater length than $2\ell.$ Thus we have $b_1 = \cdots = b_{2\ell} = 0$. 
	The only such words of length $2\ell+1$ are of the form $LxL$ for some letter $x$, therefore, $b_{2\ell+1} = q$.

	Every word of length $n>2\ell+1$ has $L$ as a bifix if and only if its parent has $L$ as a bifix. 
	This is why, for $k>\ell$, the definition of $b_{2k}$ includes the term $qb_{2k-1}$, and the definition of $b_{2k+1}$ includes the term $qb_{2k}$. If $b_n$ were counting $Z_2$-instances with bifix $L$, we would be done. 
	However, we do not want $b_n$ to count words that have a $Z_2$-bifix. 
	Thus, we must deal with each of the 5 cases listed in Lemma~\ref{P3cases}.

	First, let us deal with case $\ang{ii}$: $LAL = LBLLBL$ with $LBL$ the shortest $Z_2$-bifix of $LAL$. 
	The number of these of length $2k$, with $k > \ell$, is $b_k$. 
	Therefore, in the definition of $b_{2k}$, we subtract $b_k$. 
	Conveniently, the descendants of case-$\ang{ii}$ words are precisely words of case $\ang{i}$. 
	Therefore, we have accounted for two cases at once.

	Next, let us look at case $\ang{iii}$: $LAL = LBLBL$ with $LBL$ the shortest $Z_2$-bifix of $LAL$. 
	For the moment, assume $|L| = \ell$ is even. Then $|LBLBL|$ is even. 
	The number of such words of length $2k$, with $k > \ell$, is $b_{k+\ell/2}$. 
	We want to exclude words of this form, but we do not necessarily want to exclude their children. 
	Therefore, in the definition of $b_{2k}$ we subtract $b_{k+\ell/2}$, but then we add $qb_{k+\ell/2}$ in the definition of $b_{2k+1}$. 

	Now we look at when $|L|$ is odd, so $|LBLBL|$ is odd. 
	The number of such words of length $2k+1$, with $k > \ell$, is $b_{k+\ceil{\ell/2}}$. 
	Therefore, in the definition of $b_{2k+1}$ we subtract $b_{k+\ceil{\ell/2}}$, but then we add $qb_{(k - 1) +\ceil{\ell/2}} = qb_{k + \floor{\ell/2}}$ in the definition of $b_{(2(k-1)+1) + 1} = b_{2k}$.

	Our work so far renders the following tentative definition of $b_n$.
	\begin{eqnarray*}
		\text{For even }\ell:\\
		b_1 = \cdots = b_{2\ell} & = & 0,\\
		b_{2\ell+1} & = & q,\\
		b_{2k} & \approx & qb_{2k-1} - (b_k + b_{k + \ell/2}) \text{ for } k>\ell,\\
		b_{2k+1} & \approx & q(b_{2k} + b_{k + \ell/2}) \text{ for } k>\ell.\\
%		\\
		\text{For odd }\ell:\\
		b_1 = \cdots = b_{2\ell} & = & 0,\\
		b_{2\ell+1} & = & q,\\
		b_{2k} & \approx & q(b_{2k-1} + b_{k+\floor{\ell/2}}) - b_k \text{ for } k>\ell,\\
		b_{2k+1} & \approx & qb_{2k} - b_{k+\lceil\ell/2\rceil} \text{ for } k>\ell.
	\end{eqnarray*}

	We continue with case $\ang{iv}$: $LAL = LLFLLFLL$ with $LLFLL$ the shortest $Z_2$-bifix of $LAL$. 
	Note that $|LLFLLFLL|$ is even. 
	It would apear that the number of such words of length $2k$ would be $b_{k - \ell}$ (counting words of the form $LFL$), which we could deal with in the same fashion as we did for case $\ang{iii}$. 
	However, when counting words of the form $LFL$, we do not want words of the form $LLGLL$, because $LLFLLFLL = LLLGLLLLGLLL$ is already accounted for in case $\ang{i}$.

	\textbf{Stage II}\\
	To address this issue, we will define two different recursions. 
	Let $d_n$ count the $Z_2$-instances of the form $LLALL$ that are $Z_2$-bifix free. 
	Let $c_n$ count all other $Z_2$-instances of the form $LAL$ that are $Z_2$-bifix free. Therefore, $b_n = c_n + d_n$ by definition. 

	As with $b_n$, we quickly see that $c_n = 0$ for $n\leq 2\ell$ and $c_{2\ell+1} = q$. 
	Now the shortest words counted by $d_n$ are of the form $LLxLL$ for some letter $x$, so $d_n = 0$ for $n\leq 4\ell$ and $d_{4\ell+1} = q$. %But we don't want these words, or any of their descendent counted by $c_n$. Therefore, we subtract $

	To deal with cases $\ang{i}$ and $\ang{ii}$, we can do the same things as before, but recognizing that $LL$ is a bifix of $LBLLBL$ if and only if $LL$ is a bifix of $LBL$. 
	Therefore, subtract $c_k$ in the definition of $c_{2k}$ and subtract $d_k$ in the definition of $d_{2k}$ (both for $k>\ell$).

	We also deal with case $\ang{iii}$ as before, recognizing that $LL$ is a bifix of $LBLBL$ if and only if $LL$ is a bifix of $LBL$. 
	For even $\ell$: subtract $c_{k+\ell/2}$ in the definition of $c_{2k}$ and add $qc_{k+\ell/2}$ in the definition of $c_{2k+1}$; subtract $d_{k+\ell/2}$ in the definition of $d_{2k}$ and add $qd_{k+\ell/2}$ in the definition of $d_{2k+1}$. 
	For odd $\ell$: subtract $c_{k+\ceil{\ell/2}}$ in the definition of $c_{2k+1}$ and add $qc_{k+\floor{\ell/2}}$ in the definition of $c_{2k}$; subtract $d_{k+\ceil{\ell/2}}$ in the definition of $d_{2k+1}$ and add $qd_{k+\floor{\ell/2}}$ in the definition of $d_{2k}$.

	Having split $b_n$ into $c_n$ and $d_n$, we can address case $\ang{iv}$: $LAL = LLFLLFLL$ with $LLFLL$ the shortest $Z_2$-bifix of $LAL$. 
	These words are counted by $d_n$, not by $c_n$, and there are $d_{k+\ell}$ such words of length $2k$. 
	Therefore, we subtract $d_{k+\ell}$ in the definition of $d_{2k}$ and add $qd_{k+\ell}$ in the definition of $d_{2k+1}$.

	This brings us to the following tentative definitions of $c_n$ and $d_n$.
	\begin{eqnarray*}
		\text{For even }\ell:\\
		c_1 = \cdots = c_{2\ell} & = & 0,\\
		c_{2\ell+1} & = & q,\\
		c_{2k} & \approx & qc_{2k-1} - (c_k + c_{k + \ell/2}),\\
		c_{2k+1} & \approx & q(c_{2k} + c_{k + \ell/2});\\
%		\\
		d_1 = \cdots = d_{4\ell} & = & 0,\\
		d_{4\ell+1} & = & q,\\
		d_{2k} & \approx & qd_{2k-1} - (d_k + d_{k+\ell} + d_{k + \ell/2}),\\
		d_{2k+1} & \approx & q(d_{2k} + d_{k+\ell} + d_{k + \ell/2}).\\
%		\\
		\text{For odd }\ell:\\
		c_1 = \cdots = c_{2\ell} & = & 0,\\
		c_{2\ell+1} & = & q,\\
		c_{2k} & \approx & q(c_{2k-1} + c_{k+\floor{\ell/2}}) - c_k,\\
		c_{2k+1} & \approx & qc_{2k} - c_{k+\lceil\ell/2\rceil};\\
%		\\
		d_1 = \cdots = d_{4\ell} & = & 0,\\
		d_{4\ell+1} & \approx & q,\\
		d_{2k} & \approx & q(d_{2k-1}+ d_{k + \lfloor\ell/2\rfloor}) - (d_k + d_{k+\ell}),\\
		d_{2k+1} & \approx & q(d_{2k} + d_{k+\ell}) -  d_{k + \lceil\ell/2\rceil}.
	\end{eqnarray*}

	\textbf{Stage III}\\
	Next, let us deal with case $\ang{v}$: $LLLL$. 
	We merely need to subtract 1 in the definition of $c_{4\ell}$. 
	Since all of the words counted by $d_n$ are descendants of $LLLL$, this is what prevents overlap of the words counted by $c_n$ and $d_n$.

	There was a small omission in the previous stage. 
	When dealing with cases $\ang{i}$ and $\ang{ii}$, we pointed out that $LL$ is a bifix of $LBLLBL$ if and only if $LL$ is a bifix of $LBL$, this was a true and important observation. 
	The one problem is that $LLL$ has $LL$ as a bifix but is not of the form $LLALL$. 
	Therefore, $LLLLLL$ was ``removed'' in the definition of $c_{6\ell}$ when it should have been ``removed'' from $d_{6\ell}$. 
	We must account for this by adding 1 in the definition of $c_{6\ell}$ and subtracting 1 in the definition of $d_{6\ell}$.

	Similarly, in dealing with case $\ang{iii}$, we ``removed'' $LLLLL$ in the definition of $c_{5\ell}$ and ``replaced'' its children in the definition of $c_{5\ell+1}$. These should have happened to $d_n$. 
	Therefore, we add 1 and subtract $q$ in the definitions of $c_{5\ell}$ and $c_{5\ell+1}$, respectively, then subtract 1 and add $q$ in the definitions of $d_{5\ell}$ and $d_{5\ell+1}$, respectively.

	Since $LLL$ does not cause any trouble with case $\ang{iv}$, we are done building the recursive definition for even $\ell$ as found in the theorem statement.

	\textbf{Stage IV}\\
	The recursion for odd $\ell$ has the additional caveat that $\ell \neq 1$. When $\ell=1$, there exist conflicts in the recursive definitions: $4\ell + 1 = 5\ell$ and $5\ell+1 = 6\ell$. After consolidating the``adjustments'' for these cases, we get the definition for $\ell=1$ as appears in the theorem statement.
%\begin{spacing}{.6}
%	\begin{eqnarray*}
%		\text{For } \ell=1:\\
%		c_1 = c_2 & = & 0;\\
%		c_3 & = & q;\\
%		c_4 & = & qc_3 - 1;\\
%		c_5 & = & qc_4 - (c_3 - 1);\\
%		c_{6} & = & q(c_5 + c_3 - 1) - (c_3 - 1);\\
%		c_{2k} & = & q(c_{2k-1} + c_k) - c_k \text{ for } k>3;\\
%		c_{2k+1} & = & qc_{2k} - c_{k+1}  \text{ for } k>2.\\
%		\\
%		d_1 = d_2 = d_3 = d_4 & = & 0;\\
%		d_5 & = & q - 1;\\
%		d_{6} & = & q(d_5+1) - 1;\\
%		d_{2k} & = & q(d_{2k-1}+ d_k) - (d_k + d_{k+1}) \text{ for } k>3;\\
%		d_{2k+1} & = & q(d_{2k} + d_{k+1}) -  d_{k + 1} \text{ for } k> 2.
%	\end{eqnarray*}
%\end{spacing}
\end{proof}

With our recursively defined sequences $a_n$ and $b_n$, the latter in terms of $c_n$ and $d_n$, we are now able to formulate Theorem~\ref{thm:IZ2} for $Z_3$.

\begin{thm} \label{thm:IZ3}
For integers $q \geq 2$,
\begin{eqnarray*}
	 \II(Z_3,q) & = & \sum_{\ell = 1}^{\infty} a_\ell \left(\sum_{i = 0}^{\infty}(G(i) + H(i))\right).
\end{eqnarray*} 

where
\begin{eqnarray*}
	G(i) = G_\ell^{(q)}(i) &=& \frac{ (-1)^i r\!\left(q^{-2^{i+1}}\right) \prod_{j = 0}^{i-1} s\!\left(q^{-2^{j+1}}\right)}{ \prod_{k = 0}^i \left(1 - q^{1-2^{k+1}}\right)};\\
	r(x) = r_\ell^{(q)}(x) & = & qx^{2\ell+1} - x^{4\ell} + x^{5\ell} - qx^{5\ell+1} +x^{6\ell};\\
	s(x) = s_\ell^{(q)}(x) & = & 1 - qx^{1-\ell} + x^{-\ell};\\
	H(i) = H_\ell^{(q)}(i) &=& \frac{ (-1)^i u\!\left(q^{-2^{i+1}}\right) \prod_{j = 0}^{i-1} v\!\left(q^{-2^{j+1}}\right) }{ \prod_{k = 0}^i \left(1 - q^{1-2^{k+1}}\right)};\\
	u(x) = u_\ell^{(q)}(x) & = & qx^{4\ell+1} - x^{5\ell} + qx^{5\ell+1} - x^{6\ell};\\
	v(x) = v_\ell^{(q)}(x) & = & 1 - qx^{1-\ell} + x^{-\ell} - qx^{1-2\ell} + x^{-2\ell}.
\end{eqnarray*}
\end{thm}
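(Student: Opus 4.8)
The plan is to follow exactly the template of Theorem~\ref{thm:IZ2}, but applied separately to the two sequences $c_m^\ell$ and $d_m^\ell$ whose sum is $b_m^\ell$. By \eqref{eqn:IZ3} it suffices to evaluate the inner sum $\sum_{m\geq 1} b_m^\ell q^{-2m}$ for each fixed bifix-free length $\ell$, and since $b_m^\ell = c_m^\ell + d_m^\ell$ (Theorem~\ref{P3}), this inner sum splits as $C_\ell(q^{-2}) + D_\ell(q^{-2})$, where $C_\ell(x) = \sum_{m\geq 0} c_m^\ell x^m$ and $D_\ell(x) = \sum_{m\geq 0} d_m^\ell x^m$ are the ordinary generating functions of the two recursively defined sequences. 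Thus the whole theorem reduces to showing $C_\ell(q^{-2}) = \sum_{i\geq 0} G(i)$ and $D_\ell(q^{-2}) = \sum_{i\geq 0} H(i)$, after which summing $a_\ell$ against the result recovers the displayed formula.

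First I would convert the recursions of Theorem~\ref{P3} into functional equations. The standard device is to separate $C_\ell$ into its even- and odd-index parts, multiply each recursive relation by $x^n$, and sum over $n$. The multiplicative term $q c_{2k-1}$ (resp.\ $qc_{2k}$) produces the factor $qx$ acting on the complementary parity, while the ``correction'' terms are index shifts: a summand such as $c_{k+\ell/2}$ contributes $x^{-\ell}C_\ell(x^2)$ and a plain $c_k$ contributes $C_\ell(x^2)$, with the floor/ceiling adjustments for odd $\ell$ handled by $2\lceil \ell/2\rceil = \ell+1$ and $2\lfloor\ell/2\rfloor = \ell-1$. Adding the even and odd halves, the homogeneous part collapses to the single coefficient $qx^{1-\ell} - 1 - x^{-\ell} = -s(x)$ multiplying $C_\ell(x^2)$, while the finitely many exceptional indices ($2\ell+1,\,4\ell,\,5\ell,\,5\ell+1,\,6\ell$ for $c$, and $4\ell+1,\,5\ell,\,5\ell+1,\,6\ell$ for $d$) assemble into the explicit polynomials $r(x)$ and $u(x)$, whose monomials and signs match those boundary corrections term-for-term. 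The analogous computation for $d$ picks up the extra shift $d_{k+\ell}$ from case $\langle$iv$\rangle$, which is precisely what turns $s(x)$ into $v(x)$ by adjoining $-qx^{1-2\ell}+x^{-2\ell}$. The outcome is the pair
\begin{equation*}
C_\ell(x)\,(1-qx) = r(x) - s(x)\,C_\ell(x^2), \qquad D_\ell(x)\,(1-qx) = u(x) - v(x)\,D_\ell(x^2).
\end{equation*}

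Next I would solve each functional equation by the same telescoping iteration used for \eqref{Z2func}: writing $C_\ell(x) = \bigl(r(x) - s(x)C_\ell(x^2)\bigr)/(1-qx)$ and substituting repeatedly, the iterate at depth $i$ contributes $(-1)^i r(x^{2^i})\prod_{j=0}^{i-1} s(x^{2^j})$ over the accumulated denominator $\prod_{k=0}^{i}(1-qx^{2^k})$, yielding
\begin{equation*}
C_\ell(x) = \sum_{i=0}^\infty \frac{(-1)^i\, r(x^{2^i})\,\prod_{j=0}^{i-1} s(x^{2^j})}{\prod_{k=0}^{i}\bigl(1-qx^{2^k}\bigr)},
\end{equation*}
and similarly for $D_\ell$ with $u,v$ in place of $r,s$. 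Evaluating at $x = q^{-2}$ gives $x^{2^i} = q^{-2^{i+1}}$, $x^{2^j} = q^{-2^{j+1}}$, and $1-qx^{2^k} = 1-q^{1-2^{k+1}}$, which are exactly the quantities appearing in $G(i)$ and $H(i)$; so $C_\ell(q^{-2}) = \sum_i G(i)$ and $D_\ell(q^{-2}) = \sum_i H(i)$, completing the reduction. Convergence of both the iteration and the final series is routine since $q^{-2} < 1$ for $q\geq 2$, so the residual term $s(x)^{\,i}C_\ell(x^{2^{i+1}})$ vanishes and the geometric-type denominators stay bounded away from zero; I would dispatch this exactly as the monotonicity argument cited for Corollary~\ref{cor:IZ2}.

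I expect the main obstacle to be the bookkeeping in the second step, not the algebra of the third. The recursions in Theorem~\ref{P3} are stated in three regimes---even $\ell$, odd $\ell>1$, and $\ell=1$---with different floor/ceiling placements and different consolidated exceptional cases, yet they must all produce the \emph{same} functional equations. The delicate point is verifying this uniformity: that the parity-dependent shifts $c_{k+\lfloor \ell/2\rfloor}$ and $c_{k+\lceil \ell/2\rceil}$ recombine to the single coefficient $-s(x)$ regardless of the parity of $\ell$, and that the scattered base-case and exceptional corrections (including the degenerate coincidences $4\ell+1 = 5\ell$ and $5\ell+1 = 6\ell$ when $\ell = 1$) sum precisely to $r(x)$ and $u(x)$ with no stray monomials. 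Once that term-by-term matching is confirmed in each regime, the remaining derivation is a direct transcription of the $Z_2$ computation.
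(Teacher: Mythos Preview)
Your proposal is correct and follows essentially the same approach as the paper: define generating functions for $c_n^\ell$ and $d_n^\ell$, derive the functional equations $g(x)(1-qx)=r(x)-s(x)g(x^2)$ and $h(x)(1-qx)=u(x)-v(x)h(x^2)$ from the recursions of Theorem~\ref{P3}, iterate to obtain the series, and evaluate at $x=q^{-2}$. You have in fact been more explicit than the paper about the one genuinely delicate point---that the three parity regimes for $\ell$ all collapse to the same functional equation---which the paper simply asserts without verification.
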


\begin{proof}
	Recalling Equation~\eqref{eqn:IZ3},
	\begin{eqnarray*}
		\II(Z_3,q) & = & \sum_{\ell = 1}^{\infty} \left( a_\ell \sum_{m = 1}^\infty b_m^\ell q^{-2m} \right) \\
		& = & \sum_{\ell = 1}^{\infty} \left( a_\ell \sum_{m = 1}^\infty \left(c_m^\ell + d_m^\ell\right) q^{-2m} \right).
	\end{eqnarray*}

Similar to our proof for $\II(Z_2,q)$, let us define generating functions for the sequences $c_n = c_n^\ell$ and $d_n = d_n^\ell$:
\[ g(x) = g_\ell^{(q)}(x) = \sum_{i = 1}^\infty c_nx^n \text{ and } h(x) = h_\ell^{(q)}(x) = \sum_{i = 1}^\infty d_nx^n. \]
Despite having to write the recursive relations three different ways, depending on $\ell$, the underlying recursion is fundamentally the same and results in the following functional equations:
\begin{eqnarray}
	g(x) & = & q\left(xg(x) + x^{1-\ell}g(x^2) + x^{2\ell+1} - x^{5\ell+1}\right) \label{eqn:g}\\
	\nonumber & & - \left(g(x^2) + x^{-\ell}g(x^2) + x^{4\ell} - x^{5\ell} - x^{6\ell}\right);\\
	h(x) & = & q\left(xh(x) + x^{1-2\ell}h(x^2) + x^{1-\ell}h(x^2) + x^{4\ell+1} + x^{5\ell+1}\right) \label{eqn:h} \\
	\nonumber & & - \left(h(x^2) + x^{-2\ell}h(x^2) + x^{-\ell}h(x^2) + x^{5\ell} + x^{6\ell}\right).
\end{eqnarray}

Solving \eqref{eqn:g} for $g(x)$, we get
\begin{eqnarray}
	g(x) = \frac{r(x) - s(x)g(x^2)}{1-qx}, \label{eqn:g2}
\end{eqnarray}
with $r(x)$ and $s(x)$ as defined in the theorem statement. 
%\begin{eqnarray*}
%	r(x) = r_\ell^{(q)}(x) & = & qx^{2\ell+1} - x^{4\ell} + x^{5\ell} - qx^{5\ell+1} +x^{6\ell};\\
%	s(x) = s_\ell^{(q)}(x) & = & 1 - qx^{1-\ell} + x^{-\ell}.
%\end{eqnarray*}
Expanding \eqref{eqn:g2} gives
\begin{eqnarray}
	\nonumber g(x) & = & \frac{r(x) - s(x)g(x^2)}{1-qx} \\
	\nonumber & = & \frac{r(x)}{1-qx}\left(1 -  \frac{s(x)}{r(x)}g(x^2)\right)\\
	\nonumber & = & \frac{r(x)}{1-qx}\left(1 -  \frac{s(x)}{r(x)}\frac{r(x^2) - s(x^2)g(x^4)}{1-qx^2}\right)\\
	\nonumber & = & \frac{r(x)}{1-qx}\left(1 -  \frac{s(x)}{r(x)}\frac{r(x^2)}{1-qx^2}\left(1 - \frac{s(x^2)}{r(x^2)}g(x^4) \right)\right)\\
	\nonumber & \vdots & \\
	& = & \sum_{i = 0}^\infty \frac{ (-1)^i r\!\left(x^{2^i}\right) \prod_{j = 0}^{i-1} s\!\left(x^{2^j}\right) }{ \prod_{k = 0}^i \left(1 - qx^{2^k}\right)}.
\end{eqnarray}
Likewise, solving \eqref{eqn:h} for $h(x)$, we get
\begin{eqnarray}
	h(x) &=& \frac{u(x) - v(x)h(x^2)}{1-qx}\\
	& = & \sum_{i = 0}^\infty \frac{(-1)^i u\!\left(x^{2^i}\right) \prod_{j = 0}^{i-1} v\!\left(x^{2^j}\right) }{ \prod_{k = 0}^i \left(1 - qx^{2^k}\right)},
\end{eqnarray}
with $u(x)$ and $v(x)$ as defined in the theorem statement.
%\begin{eqnarray*}
%	u(x) = u_\ell^{(q)}(x) & = & qx^{4\ell+1} - x^{5\ell} + qx^{5\ell+1} - x^{6\ell};\\
%	v(x) = v_\ell^{(q)}(x) & = & 1 - qx^{1-\ell} + x^{-\ell} - qx^{1-2\ell} + x^{-2\ell}.
%\end{eqnarray*}
\end{proof}

\begin{cor} \label{P3bounds}
For integers $N \geq 0$ and $M \geq 0$,
\begin{eqnarray*}
	\sum_{\ell = 1}^{N} a_\ell \left(\sum_{i = 0}^{2M+1} (G(i) + H(i)) \right) & \leq & \II(Z_3,q);\\
	 \II(Z_3,q) & \leq & q^{-N} + \sum_{\ell = 1}^{N} a_\ell \left(\sum_{i = 0}^{2M}(G(i) + H(i))\right),
\end{eqnarray*} 
with $G(i) = G_\ell^{(q)}(i)$ and $H(i) = H_\ell^{(q)}(i)$ as defined in Theorem~\ref{thm:IZ3}.
%where
%\begin{eqnarray*}
%	G(i) = G_\ell^{(q)}(i) &=& \frac{ (-1)^i r\!\left(q^{-2^{i+1}}\right) \prod_{j = 0}^{i-1} s\!\left(q^{-2^{j+1}}\right)}{ \prod_{k = 0}^i \left(1 - q^{1-2^{k+1}}\right)};\\
%	r(x) = r_\ell^{(q)}(x) & = & qx^{2\ell+1} - x^{4\ell} + x^{5\ell} - qx^{5\ell+1} +x^{6\ell};\\
%	s(x) = s_\ell^{(q)}(x) & = & 1 - qx^{1-\ell} + x^{-\ell};\\
%	H(i) = H_\ell^{(q)}(i) &=& \frac{ (-1)^i u\!\left(q^{-2^{i+1}}\right) \prod_{j = 0}^{i-1} v\!\left(q^{-2^{j+1}}\right) }{ \prod_{k = 0}^i \left(1 - q^{1-2^{k+1}}\right)};\\
%	u(x) = u_\ell^{(q)}(x) & = & qx^{4\ell+1} - x^{5\ell} + qx^{5\ell+1} - x^{6\ell};\\
%	v(x) = v_\ell^{(q)}(x) & = & 1 - qx^{1-\ell} + x^{-\ell} - qx^{1-2\ell} + x^{-2\ell}.
%\end{eqnarray*}
\end{cor}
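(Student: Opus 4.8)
The plan is to read off from Theorem~\ref{thm:IZ3} the identity $\II(Z_3,q)=\sum_{\ell=1}^{\infty}a_\ell\,T_\ell$, where I abbreviate $T_\ell := \sum_{i=0}^{\infty}\bigl(G_\ell^{(q)}(i)+H_\ell^{(q)}(i)\bigr)$, and to recall from Equation~\eqref{eqn:IZ3} that this same inner quantity equals $\sum_{m=1}^{\infty}b_m^\ell q^{-2m}$, a sum of nonnegative terms; in particular $a_\ell\ge 0$ and $T_\ell\ge 0$. The corollary truncates this double sum in two independent places --- the outer sum over $\ell$ at $N$, and the inner alternating sum over $i$ at $2M$ or $2M+1$ --- so I would prove it by controlling each truncation with a one-sided estimate and then chaining the inequalities.

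\textbf{Inner sum (the alternating bracket).} I would show that each of the sequences $\{G_\ell^{(q)}(i)\}_{i\ge 0}$ and $\{H_\ell^{(q)}(i)\}_{i\ge 0}$ is alternating with magnitudes decreasing monotonically to $0$. The sign is immediate: every denominator factor $1-q^{1-2^{k+1}}$ lies in $(0,1)$ because $2^{k+1}\ge 2$ forces $q^{1-2^{k+1}}\le q^{-1}<1$, while for the tiny arguments $x=q^{-2^{j+1}}$ the polynomials $r,s,u,v$ are dominated by their lowest-degree terms and are positive; hence $G(i)$ and $H(i)$ carry exactly the factor $(-1)^i$. Granting the monotone decrease of magnitudes, the standard alternating-series estimate gives $\sum_{i=0}^{2M}G(i)\ge\sum_i G(i)$ and $\sum_{i=0}^{2M+1}G(i)\le\sum_i G(i)$, and likewise for $H$; adding the two brackets at matching parity yields $\sum_{i=0}^{2M+1}(G(i)+H(i))\le T_\ell\le\sum_{i=0}^{2M}(G(i)+H(i))$ for every $\ell$. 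This is the $Z_3$ analogue of the monotonicity used in Corollary~\ref{cor:IZ2}, which I would establish exactly as Lemma~\ref{lemF} does for the $Z_2$ series, by bounding the ratio of consecutive terms.

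\textbf{Outer sum (truncation and tail).} Because every $a_\ell T_\ell\ge 0$, dropping the tail gives $\sum_{\ell=1}^{N}a_\ell T_\ell\le\II(Z_3,q)$; combining this with the inner underestimate yields the claimed lower bound. For the upper bound I need the tail estimate $\sum_{\ell=N+1}^{\infty}a_\ell T_\ell\le q^{-N}$. Here I would use the crude counts $a_\ell\le q^\ell$ (there are at most $q^\ell$ words of length $\ell$) and $b_m^\ell\le q^{\,m-2\ell}$ (a length-$m$ word with the fixed bifix-free $L$ as a non-overlapping prefix and suffix has its remaining $m-2\ell$ letters free, and $b_m^\ell=0$ for $m\le 2\ell$). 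These give $T_\ell\le\sum_{m>2\ell}q^{\,m-2\ell}q^{-2m}=q^{-2\ell}\cdot\frac{q^{-2\ell}}{q-1}\le q^{-2\ell}$, hence $a_\ell T_\ell\le q^{-\ell}$ and $\sum_{\ell>N}a_\ell T_\ell\le\sum_{\ell>N}q^{-\ell}=\frac{q^{-N}}{q-1}\le q^{-N}$ for $q\ge 2$. Writing $\II(Z_3,q)=\sum_{\ell=1}^{N}a_\ell T_\ell+\sum_{\ell>N}a_\ell T_\ell$, bounding the first sum by the inner overestimate and the second by $q^{-N}$ gives the upper bound.

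\textbf{Main obstacle.} The only genuinely delicate step is the monotone decrease of $\{|G_\ell^{(q)}(i)|\}$ and $\{|H_\ell^{(q)}(i)|\}$ underpinning the alternating bracket, since $G$ and $H$ are ratios whose numerators mix several powers of the doubly-exponentially shrinking variable $q^{-2^{i+1}}$; one must check, uniformly in $\ell\ge 1$ and $q\ge 2$, that passing from index $i$ to $i+1$ multiplies the magnitude by a factor strictly less than $1$. I expect this to reduce, as in Lemma~\ref{lemF}, to showing that the new denominator factor $1-q^{1-2^{i+2}}$ together with the new numerator factor $s$ (resp.\ $v$) outweigh the change in $r$ (resp.\ $u$); the remaining estimates are routine.
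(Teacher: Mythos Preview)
Your proposal is correct and matches the paper's own proof essentially step for step: the paper also splits the argument into (i) an alternating-series bracket for the inner sum, reducing to the monotone decrease of $|G(i)|$ and $|H(i)|$ (proved as Lemma~\ref{lemGH} by the ratio test you anticipate), and (ii) the tail bound $\sum_{\ell>N}a_\ell T_\ell\le q^{-N}$ via $a_\ell\le q^\ell$ and $T_\ell\le q^{-2\ell}$. The only cosmetic difference is that the paper obtains $T_\ell\le q^{-2\ell}$ directly from the probabilistic interpretation (a word of the form $LALBLAL$ must begin and end with $L$), whereas you derive it by summing the cruder termwise bound $b_m^\ell\le q^{m-2\ell}$.
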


\begin{proof} 
	For fixed integers $q\geq 2$ and $\ell \geq 1$, $\sum_{i=0}^\infty (G(i) + H(i))$ is an alternating series. 
	We need to show that the sequence $|G(i)+H(i)|$ is decreasing. Since $(-1)^iG(i)>0$ and $(-1)^iH(i) > 0$ for each $i$, $|G(i) +H(i)| = |G(i)| + |H(i)|$. 
	Thus it suffices to show that $\left\{|G(i)|\right\}_{i=1}^{\infty}$ and $\left\{|H(i)|\right\}_{i = 1}^\infty$ are both decreasing sequences, the routine proof of which can be found in the appendices (Lemma~\ref{lemGH}).
	
	Now for any integer $M\geq 0$:
	\[\sum_{i=0}^{2M+1} G_\ell(i) + H_\ell(i) < \sum_{m = 0}^{\infty} b_m^\ell q^{-2m} < \sum_{i=0}^{2M} G_\ell(i) + H_\ell(i).\]
	Moreover, since the $a_\ell$ are nonnegative, the lower bound for the theorem is evident. 
	For a bifix-free word $L$ of length $\ell$, $\sum_{m = 0}^{\infty} b_m^\ell q^{-2m}$ is the limit, as $M \rightarrow \infty$, of the probability that a word of length $M$ is a $Z_3$-instance of the form $LALBLAL$. 
	A necessary condition for such a word is that it starts and ends with $L$, which (for $M\geq 2\ell$) has probability $q^{-2\ell}$. 
	Also $a_\ell$ counts the number of bifix-free words of length $\ell$, so $a_\ell \leq q^\ell$. 
	Hence for any integer $N \geq 0$:
	\begin{eqnarray*}
		\II(Z_3,q) & < & \sum_{\ell=1}^{N}a_\ell \sum_{m = 0}^{\infty} b_m^\ell q^{-2m} + \sum_{\ell = N+1}^{\infty}q^\ell \left(q^{-2\ell}\right)\\
		& = &\sum_{\ell=1}^{N}a_\ell \sum_{m = 0}^{\infty} b_m^\ell q^{-2m} + \sum_{\ell = N+1}^{\infty} q^{-\ell}\\
		& \leq &\sum_{\ell=1}^{N}a_\ell \sum_{m = 0}^{\infty} b_m^\ell q^{-2m} + q^{-N}.
	\end{eqnarray*}
\end{proof}

\begin{table}[ht]
\centering
	\caption{Approximate values of $\II(Z_3,q)$ for $2 \leq q \leq 6$.} \label{table:IZ3}

	\begin{tabular}{c | c | c | c | c | c}
		$q$ & 2 & 3 & 4 & 5 & 6 \\ \hline
		$\II(Z_3,q)$ & 0.11944370 & 0.01835140 & 0.00519251 & 0.00199739 & 0.00092532
	%	$\II(Z_3,q) \approx$ & 0.1194436953 & 0.0183514041 & 0.0051925078 & 0.0019973935 & 0.0009253245
	\end{tabular}

\end{table}

The values in Table~\ref{table:IZ3} were generated by the Sage code found in Appendix~\ref{P3code}, which was derived directly from Corollary~\ref{P3bounds} and can be used to compute $\II(Z_3,q)$ to arbitrary precision for any $q\geq 2$.

%%%%%%%%%%%%%%%%%%%%%%%%%%%%%%%%%%%%%%%%%%
\section{Bounding \texorpdfstring{$\II(Z_n,q)$ for Arbitrary $n$}{the asymptotic instance probability of Zn}} \label{IZn}

This programme is not practical for $n$ in general. 
The number of cases for a generalization of Lemma 3.1 is likely to grow with $n$. 
Even if that stabilizes somehow, the expression for calculating $\II(Z_n,q)$ requires $n$ nested infinite series.
Nevertheless, ignoring some of the more subtle details, we proceed with this method to obtain computable upper bounds for $\II(Z_n,q)$.

%For $\II(Z_n,q)$, write as $n$-summations, citing canonical decomposition

%[Dare I use notation ${}_q^n b_m^\ell$?]

Fix a $Z_{n-2}$-instance $L$ of length $\ell \geq 1$, let $\hat{b}_m^\ell$ be the number of words of length $m$ of the form $LAL$ for $A \neq \varepsilon$ but not of the form $LBLBL$, $LBLLBL$, or $LBLCLBL$. 
This corresponds to Stage~I from the proof of Theorem~\ref{P3}. 
As we do not account for the structure of $L$, $\hat{b}$ is an overcount for the number of $Z_{n-1}$-instances of the form $LAL$ that do not have a $Z_{n-1}$-bifix of the form $LAL$. 
Then $\hat{b}_m = \hat{b}_m^\ell$ is recursively defined as follows:

\begin{eqnarray*}
	\text{For even } \ell: \\
	\hat{b}_0 = \cdots = \hat{b}_{2\ell} & = & 0,\\
	\hat{b}_{2k} & = & q\hat{b}_{2k-1} - (\hat{b}_k + \hat{b}_{k + \ell/2}) \text{ for } k>\ell, \\
	\hat{b}_{2k+1} & = & q(\hat{b}_{2k} + \hat{b}_{k+\ell/2}) \text{ for } k>\ell. \\
%	\\
	\text{For odd } \ell: \\
	\hat{b}_0 = \cdots = \hat{b}_{2\ell} & = & 0, \\
	\hat{b}_{2k} & = & q(\hat{b}_{2k-1} + \hat{b}_{k + \floor{\ell/2}}) - \hat{b}_k \text{ for } k>\ell, \\
	\hat{b}_{2k+1} & = & q\hat{b}_{2k} - \hat{b}_{k+\ceil{\ell/2}} \text{ for } k>\ell. 
\end{eqnarray*}

The associated generating function $\hat{f}_\ell(x) := \hat{f}_\ell^{(q)}(x) = \sum_{m=1}^{\infty} \hat{b}_m^\ell x^m$ satisfies
\[\hat{f}_\ell(x) = q(x^{2\ell+1} + x\hat{f}(x) + x^{1-\ell}\hat{f}(x^2)) - (\hat{f}(x^2) + x^{-\ell}\hat{f}(x^2)).\]
Therefore, setting $t_\ell(x) = t_\ell^{(q)}(x) = 1 - qx^{1-\ell} + x^{-\ell}$,
\begin{eqnarray*}
	\hat{f}_\ell(x) & = & \frac{qx^{2\ell+1} - t_\ell(x)\hat{f}(x^2)}{1-qx} \\
	& = &  q\cdot \sum_{i = 0}^\infty \frac{ (-1)^i x^{(2^i)(2\ell+1)} \prod_{j = 0}^{i-1} t_\ell\!\left(x^{2^j}\right) }{ \prod_{k = 0}^i \left(1 - qx^{2^k}\right)}.
\end{eqnarray*}
Now $\hat{f}_\ell(q^{-2})$ gives an upper bound for the limit (as word-length approaches infinity) of the probability that a word is a $Z_n$-instance of the form $LALBLAL$ with $|L| = \ell$. 

Taking this one step further, for some $Z_i$-instance $K$ of length $\ell_i$, the asymptotic probability that a word is a $Z_n$-instance constructed with $2^{n-i+1}$ copies of $K$ is at most
\begin{equation*} \label{eq:sum}
	\sum_{\ell_{i+1}=1}^\infty \cdots \sum_{\ell_{n-2}=1}^\infty\sum_{m=1}^\infty \hat{b}_{\ell_{i+1}}^{\ell_i} \cdots \hat{b}_{\ell_{n-2}}^{\ell_{n-3}}\hat{b}_{m}^{\ell_{n-2}} q^{-2m}.
\end{equation*}
Consequently,
\begin{eqnarray*}
	\II(Z_n,q) & \leq &\sum_{\ell_1=1}^\infty \cdots \sum_{\ell_{n-2}=1}^\infty\sum_{m=1}^\infty a_{\ell_1}\hat{b}_{\ell_2}^{\ell_1} \cdots \hat{b}_{\ell_{n-2}}^{\ell_{n-3}}\hat{b}_{m}^{\ell_{n-2}} q^{-2m}  \\
	& = &\sum_{\ell_1=1}^\infty \cdots \sum_{\ell_{n-2}=1}^\infty a_{\ell_1}\hat{b}_{\ell_2}^{\ell_1} \cdots \hat{b}_{\ell_{n-2}}^{\ell_{n-3}}\hat{f}_{\ell-2}(q^{-2}) . \\
\end{eqnarray*}

We need to get control of the tails to turn this into a computable sum.
A trivial upper bound for the asymptotic probability that a word is a $Z_n$-instance constructed with $2^{n-i}$ copies of $K$, and thus starts and ends with $K$, is $q^{-2\ell_i}$.
Since there are at most $q^{\ell_i}$ $Z_i$-instances of length $\ell_i$, the asymptotic probability that a word is a $Z_n$-instance with a $Z_i$-component of length $\ell_i$ is at most $q^{-\ell_i}$.
Therefore, the asymptotic probability that a word is a $Z_n$-instance with a $Z_i$-component of length greater than $N_i$ is at most
\begin{equation*} \label{eq:tail}
	\sum_{\ell_i = N_i+1}^\infty q^{-\ell_i} = \frac{q^{-N_1}}{q-1} .
\end{equation*}
Now in the upper bound of $\II(Z_n,q)$, we can replace the partial tail 
\[
	\sum_{\ell_1=1}^{N_1} \cdots \sum_{\ell_{i-1}=1}^{N_n}  \sum_{\ell_{i}=N_i+1}^\infty \sum_{\ell_{i+1}=1}^\infty \cdots \sum_{\ell_{n-2}=1}^\infty a_{\ell_1}\hat{b}_{\ell_2}^{\ell_1} \cdots \hat{b}_{\ell_{n-2}}^{\ell_{n-3}}\hat{f}_{\ell-2}(q^{-2})
\]
with 
\begin{eqnarray*}
	& & \sum_{\ell_1=1}^{N_1} \cdots \sum_{\ell_{i-1}=1}^{N_n} a_{\ell_1}\hat{b}_{\ell_2}^{\ell_1} \cdots \hat{b}_{\ell_{i-1}}^{\ell_{i-2}} \frac{q^{-N_1}}{q-1} \\
	& \leq & \left( \prod_{j = 1}^{i-1} N_j \right) \max_{\substack{\ell_j \leq N_j \\ 1 \leq j < i}}\left(a_{\ell_1}\hat{b}_{\ell_2}^{\ell_1} \cdots \hat{b}_{\ell_{i-1}}^{\ell_{i-2}}\right) \frac{q^{-N_1}}{q-1}\\
	& \leq & \left( \prod_{j = 1}^{i-1} N_j \right) q^{N_{i-1}} \frac{q^{-N_1}}{q-1}.
\end{eqnarray*}

Therefore,
\begin{eqnarray*}
	\II(Z_n,q) & \leq &\sum_{\ell_1=1}^{N_1} \cdots \sum_{\ell_{n-2}=1}^{N_n} a_{\ell_1}\hat{b}_{\ell_2}^{\ell_1} \cdots \hat{b}_{\ell_{n-2}}^{\ell_{n-3}}\hat{f}_{\ell-2}(q^{-2}) + \; \sum_{i = 1}^{n-2} \left( \left( \prod_{j = 1}^{i-1} N_j \right) q^{N_{i-1}} \frac{q^{-N_i}}{q-1} \right) .
\end{eqnarray*}

\appendix

\section{Proofs and Computations for Sections~\ref{IZ2} and~\ref{IZ3}}

%%%%%%%%%%%%%%%%%%%%%%%%%%%%%%%%%%%%%%%%
\subsection{Proofs of Monotonicity}

\begin{lem} \label{lemF}
	For fixed $q \geq 2$, $\left\{|F(i)|\right\}_{i=0}^{\infty}$ is a decreasing sequence, where \[F(i) = F^q(i) = \frac{(-1)^jq^{1-2^i}}{\prod_{k=0}^{i} (1 - q^{1-2^k})}.\]
\end{lem}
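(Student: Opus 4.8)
The plan is to show that the absolute value of the ratio of consecutive terms is strictly below $1$, which immediately yields that $\{|F(i)|\}$ is decreasing. (Here I read the exponents as in the series of Theorem~\ref{thm:IZ2}, so that the $k=0$ factor is $1 - q^{1-2^{1}}$ rather than the degenerate $1 - q^{0}$.) First I would record that for $q \geq 2$ and every $k \geq 0$ we have $1 - 2^{k+1} \leq -1$, hence $q^{1-2^{k+1}} \leq q^{-1} < 1$; so every factor $1 - q^{1-2^{k+1}}$ in the denominator is strictly positive. This guarantees each $|F(i)|$ is well-defined and lets me drop the absolute-value bars on the individual factors, reducing the claim to a comparison of two positive quantities.

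Next I would form the quotient $|F(i+1)|/|F(i)|$. The denominator of $F(i+1)$ is that of $F(i)$ multiplied by the single extra factor $1 - q^{1-2^{i+2}}$, so the product telescopes and the quotient simplifies to
\[
\frac{|F(i+1)|}{|F(i)|} = \frac{q^{(1-2^{i+2})-(1-2^{i+1})}}{1 - q^{1-2^{i+2}}} = \frac{q^{-2^{i+1}}}{1 - q^{1-2^{i+2}}},
\]
using $2^{i+1} - 2^{i+2} = -2^{i+1}$ to collapse the numerator exponent.

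It then remains to prove that this quotient is less than $1$, i.e.\ that $q^{-2^{i+1}} + q^{1-2^{i+2}} < 1$ for all $i \geq 0$ and $q \geq 2$. Writing $a = 2^{i+1} \geq 2$, the two summands $q^{-a}$ and $q^{1-2a}$ are each decreasing in $a$ and in $q$, so their sum is maximized at $i = 0$ (that is, $a = 2$) and $q = 2$, where it equals $q^{-2} + q^{-3} \leq 2^{-2} + 2^{-3} = 3/8 < 1$. Hence $|F(i+1)| < |F(i)|$ for every $i$, as claimed.

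I do not anticipate a genuine obstacle: the argument is a routine uniform bound. The only points requiring care are the exponent bookkeeping in the telescoped ratio and confirming that no denominator factor vanishes, both of which are handled by the positivity observation in the first step.
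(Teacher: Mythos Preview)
Your proof is correct and follows essentially the same approach as the paper: compute the ratio $|F(i+1)|/|F(i)|$ (the paper uses $|F(i)|/|F(i-1)|$), telescope the product, and bound by the worst case $q=2$ at the smallest index. Your handling is in fact slightly cleaner---you rearrange directly to $q^{-a}+q^{1-2a}<1$, whereas the paper first multiplies numerator and denominator by $1+q^{1-2^{i}}$ before bounding---and you also correctly flag the off-by-one typo in the lemma's exponents relative to the series in Theorem~\ref{thm:IZ2}.
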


\begin{proof}
	For $i>0$:
	\begin{eqnarray*}
		\frac{|F(i)|}{|F(i-1)|}  & = & \frac{q^{1 - 2^i}}{q^{1 - 2^{(i-1)}}\left(1 - q^{1 - 2^i}\right)}\\
	%	 & = & \frac{q^{1 - 2^i}}{q^{1 - 2^{(i-1)}}(1 - q^{1 - 2^i})}\cdot \frac{q^{2^{(i-1)} - 1}}{q^{2^{(i-1)} - 1}}\\
		 & = & \frac{q^{-2^{(i-1)}}}{1 - q^{1 - 2^i}}\cdot \frac{1 + q^{1 - 2^i}}{1 + q^{1 - 2^i}}\\
	%	 & = & \frac{q^{-2^{(i-1)}} + q^{1 - 3\cdot 2^{(i-1)}}}{1 + q^{2 - 2^{i+1}}}\\
	%	 & \leq & \frac{(2)^{-2^{((1)-1)}} + (2)^{1 - 3\cdot 2^{((1)-1)}}}{1 +(0)}\\
	%	 & = & 2^{-1} + 2^{1-3}\\
		 & = & \frac{q^{-2^{(i-1)}}\left(1 + q^{1 - 2^i}\right)}{1 + q^{2 - 2^{i+1}}}\\
		 & < & \frac{(2)^{-2^{((1)-1)}}\left(1 + (2)^{1 - 2^{(1)}}\right)}{1 +(0)}\\
		 & = & 2^{-1}\left(1 + 2^{1 - 2}\right)\\
		 & < & 1.
	\end{eqnarray*}
\end{proof}

\begin{lem} \label{lemGH}
For fixed $\ell \geq 1$ and $q \geq 2$, $\left\{|G(i)|\right\}_{i=1}^{\infty}$ and $\left\{|H(i)|\right\}_{i = 1}^\infty$ are both decreasing sequences, where
\begin{eqnarray*}
	G(i)  = G_\ell^q(i) &=& \frac{ (-1)^i r\!\left(q^{-2^{i+1}}\right) \prod_{j = 0}^{i-1} s\!\left(q^{-2^{j+1}}\right)}{ \prod_{k = 0}^i \left(1 - q^{1-2^{k+1}}\right)};\\
	r(x) = r_\ell^q(x) & = & qx^{2\ell+1} - x^{4\ell} + x^{5\ell} - qx^{5\ell+1} +x^{6\ell};\\
	s(x) = s_\ell^q(x) & = & 1 - qx^{1-\ell} + x^{-\ell};\\
	H(i) = H_\ell^q(i) &=& \frac{ (-1)^i u\!\left(q^{-2^{i+1}}\right) \prod_{j = 0}^{i-1} v\!\left(q^{-2^{j+1}}\right) }{ \prod_{k = 0}^i \left(1 - q^{1-2^{k+1}}\right)};\\
	u(x) = u_\ell^q(x) & = & qx^{4\ell+1} - x^{5\ell} + qx^{5\ell+1} - x^{6\ell};\\
	v(x) = v_\ell^q(x) & = & 1 - qx^{1-\ell} + x^{-\ell} - qx^{1-2\ell} + x^{-2\ell}.
\end{eqnarray*}

\end{lem}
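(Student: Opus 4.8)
The plan is to mimic the proof of Lemma~\ref{lemF}: form the ratio of consecutive terms, collapse it to a single rational expression in one variable, and bound that expression strictly below $1$ uniformly. Because $G(i)$ and $H(i)$ inherit the telescoping product/quotient structure of the expansions of $g$ and $h$ from the proof of Theorem~\ref{thm:IZ3}, the ratios simplify cleanly. Writing $x = q^{-2^{i}}$ and noting $q^{-2^{i+1}} = x^{2}$, passing from index $i-1$ to $i$ appends exactly one new factor $s(x)$ to the product $\prod_{j} s(q^{-2^{j+1}})$ and one new factor $(1 - q x^{2})$ to the denominator, so that
\[
\frac{|G(i)|}{|G(i-1)|} = \frac{r(x^{2})\, s(x)}{r(x)\,(1 - q x^{2})}, \qquad \frac{|H(i)|}{|H(i-1)|} = \frac{u(x^{2})\, v(x)}{u(x)\,(1 - q x^{2})}.
\]
As $i$ ranges over $\{2,3,\dots\}$ the argument $x$ ranges over $\{q^{-4}, q^{-8}, \dots\}$, so it suffices to show both quantities are less than $1$ for every real $x \in (0, q^{-4}]$, $q \ge 2$, $\ell \ge 1$.

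First I would verify that every factor is positive on this range, so the inequalities may be manipulated freely. For $r$ and $u$ the lowest-degree monomials ($q x^{2\ell+1}$ and $q x^{4\ell+1}$) dominate and are positive; for $s$ and $v$ the most negative powers ($x^{-\ell}$ and $x^{-2\ell}$) dominate and are positive; and $1 - q x^{2}>0$ since $q x^{2} \le q^{-7} < 1$. The degeneracies at $\ell = 1$ need a separate glance: there the two lowest terms of $u$ coalesce into $(q-1)x^{5}$ and the middle term of $s$ collapses to the constant $-q$, but in each case the claimed dominant term still wins.

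The heart of the argument is a leading-order estimate. Factoring the dominant monomial out of each polynomial, writing $r(x) = q x^{2\ell+1}\bigl(1 + \rho(x)\bigr)$ and $s(x) = x^{-\ell}\bigl(1 + \sigma(x)\bigr)$ with $\rho,\sigma \to 0$ (and analogously $u,v$), the $G$-ratio becomes $x^{\ell+1}$ times a perturbation factor tending to $1$, while the $H$-ratio becomes $x^{2\ell+1}$ times such a factor; indeed $1 + \sigma(x) = 1 - qx + x^{\ell}$ and similarly for the others, each a sum of powers of $x$ with bounded coefficients. Because $x \le q^{-4} \le 1/16$, the crudest bounds keep every perturbation factor below an absolute constant such as $2$, so the whole ratio is at most $2x^{\ell+1} \le 2x^{2} < 1$, and likewise for $H$. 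As in Lemma~\ref{lemF}, it is cleanest to reduce to the resulting one-variable inequality and check it at the extreme admissible values $q = 2$, $x = q^{-4}$ after confirming monotonicity in $q$ and $x$.

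The main obstacle I anticipate is not conceptual but bookkeeping: controlling the lower-order perturbation terms uniformly in both $\ell$ and $q$, since $s$ and $v$ carry negative powers of $x$ whose magnitude grows with $\ell$, and since the $\ell = 1$ degeneracies must be treated on their own. However, the dominant-term gaps ($x^{\ell+1}$ for $G$ and $x^{2\ell+1}$ for $H$) are so favorable against the cap $x \le 1/16$ that even very loose bounds on the perturbations suffice, so the routine computation should go through without real difficulty.
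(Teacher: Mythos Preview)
Your proposal is correct and is essentially the paper's own argument: both form the consecutive ratios $\frac{|G(i)|}{|G(i-1)|}=\frac{r(x^{2})\,s(x)}{r(x)\,(1-qx^{2})}$ and $\frac{|H(i)|}{|H(i-1)|}=\frac{u(x^{2})\,v(x)}{u(x)\,(1-qx^{2})}$, then bound them below $1$ by isolating the dominant monomials and checking the extreme admissible values of $q,\ell,i$. The only cosmetic differences are that the paper keeps everything in powers of $q$ with exponents $2^{i}$ rather than substituting $x=q^{-2^{i}}$, and it carries the estimate through for $i\ge 1$ (so $x\le q^{-2}$) rather than your $i\ge 2$; otherwise the route and the bookkeeping are the same.
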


\begin{proof}
For $i>0$:
\begin{eqnarray*}
%	\frac{|G(i)|}{|G(i-1)|} & = & \frac{ r\!\left(q^{-2^{i+1}}\right) s\!\left(q^{-2^{((i-1)+1}}\right)}{\left(1 - q^{1-2^{(i+1}}\right)r\!\left(q^{-2^{(i-1)+1}}\right) }\\
	\frac{|G(i)|}{|G(i-1)|}  & = & \frac{r\!\left(q^{-2^{i+1}}\right)}{r\!\left(q^{-2^i}\right)}\cdot \frac{s\!\left(q^{-2^i}\right)}{1 - q^{1-2^{i+1}}}\\
%Too wide:
%	& = & \frac{ q\left(q^{-2^{i+1}}\right)^{2\ell+1} - \left(q^{-2^{i+1}}\right)^{4\ell} + \left(q^{-2^{i+1}}\right)^{5\ell} - q\left(q^{-2^{i+1}}\right)^{5\ell+1} +\left(q^{-2^{i+1}}\right)^{6\ell} }{q\left(q^{-2^i}\right)^{2\ell+1} - \left(q^{-2^i}\right)^{4\ell} + \left(q^{-2^i}\right)^{5\ell} - q\left(q^{-2^i}\right)^{5\ell+1} + \left(q^{-2^i}\right)^{6\ell}}\\
%	& & \cdot\frac{1 - q\left(q^{-2^i}\right)^{1-\ell} + \left(q^{-2^i}\right)^{-\ell}}{ 1 - q^{1-2^{i+1}} }\\
	& = & \frac{ q^{1-2^i(4\ell+2)} - q^{-2^i(8\ell)} + q^{-2^i(10\ell)} - q^{1-2^i(10\ell+2)} + q^{-2^i(12\ell)} }{ q^{1-2^i(2\ell+1)} - q^{-2^i(4\ell)} + q^{-2^i(5\ell)} - q^{1-2^i(5\ell+1)} + q^{-2^i(6\ell)} }\\
	& &\cdot \frac{ 1 - q^{1+2^i(\ell-1)} + q^{2^i\ell} }{ 1 - q^{1-2^i(2)} }\\
%Too long:
%	& = & \frac{ q^{1-2^i(4\ell+2)} - q^{-2^i(8\ell)}\left( 1 - q^{-2^i(2\ell)}\right) - q^{1-2^i(10\ell+2)}\left(1 - q^{-1-2^i(2\ell-2)}\right) }{ q^{1-2^i(2\ell+1)} - q^{-2^i(4\ell)} + q^{-2^i(5\ell)}\left(1 - q^{-2^i(\ell)}\right) + q^{-2^i(6\ell)} }\\
%	& &\cdot \frac{q^{2^i\ell} - \left(q^{1+2^i(\ell-1)} - 1\right) }{ 1 - q^{1-2^i(2)} }\\
	& < & \frac{ q^{1-2^i(4\ell+2)} }{ q^{1-2^i(2\ell+1)} - q^{-2^i(4\ell)} }\cdot \frac{q^{2^i\ell} }{ 1 - q^{1-2^i(2)} }\\
	& = & \frac{ q^{1-2^i(3\ell+2)} }{ q^{1-2^i(2\ell+1)} - q^{-2^i(4\ell)} - q^{2-2^i(2\ell+3)} + q^{1-2^i(4\ell+2)}}\cdot \frac{ q^{-1+2^i(2\ell+1)} }{ q^{-1+2^i(2\ell+1)} }\\
	& = & \frac{ q^{-2^i(\ell+1)} }{ 1 - q^{-1-2^i(2\ell-1)} - q^{1-2^i(2)} + q^{2^i(2\ell+1)} }\\
	& < & \frac{ (2)^{-2^1((1)+1)} }{ 1 - (2)^{-1-2^1(2(1)-1)} - (2)^{1-2^1(2)} + 0 }\\
	& = & \frac{ 2^{-4} }{ 1 - 2^{-3} - 2^{-3} } < 1;\\
%	& < & 1;\\
	\\
%	\frac{|H(i)|}{|H(i-1)|} & = & \frac{ u\!\left(q^{-2^{i+1}}\right) v\!\left(q^{-2^{((i-1)+1}}\right)}{\left(1 - q^{1-2^{i+1}}\right)u\!\left(q^{-2^{(i-1)+1}}\right) }\\
	\frac{|H(i)|}{|H(i-1)|} & = & \frac{u\!\left(q^{-2^{i+1}}\right)}{u\!\left(q^{-2^i}\right)}\cdot \frac{v\!\left(q^{-2^i}\right)}{1 - q^{1-2^{i+1}}}\\
	& = & \frac{ q^{1-2^i(8\ell+2)} - q^{-2^i(10\ell)} + q^{1-2^i(10\ell+2)} - q^{-2^i(12\ell)} }{ q^{1-2^i(4\ell+1)} - q^{-2^i(5\ell)} + q^{1-2^i(5\ell+1)} - q^{-2^i(6\ell)} }\\
	& &\cdot \frac{ 1 - q^{1+2^i(\ell-1)} + q^{2^i\ell} - q^{1+2^i(2\ell - 1)} + q^{2^i(2\ell)} }{ 1 - q^{1-2^i(2)} }\\
	& < & \frac{ q^{1-2^i(8\ell+2)} }{ q^{1-2^i(4\ell+1)} - q^{-2^i(5\ell)} }\cdot \frac{q^{2^i(2\ell)} }{ 1 - q^{1-2^i(2)} }\\
	& = & \frac{ q^{1-2^i(6\ell+2)} }{ q^{1-2^i(4\ell+1)} - q^{-2^i(5\ell)} - q^{2-2^i(4\ell+3)} + q^{1-2^i(5\ell+2)}}\cdot \frac{ q^{-1+2^i(4\ell+1)} }{ q^{-1+2^i(4\ell+1)} }\\
	& = & \frac{ q^{-2^i(2\ell+1)} }{ 1 - q^{-1-2^i(\ell-1)} - q^{1-2^i(2)} + q^{2^i(\ell+1)} }\\
	& < & \frac{ (2)^{-2^1(2(1)+1)} }{ 1 - (2)^{-1-2^1((1)-1)} - (2)^{1-2^1(2)} + 0 }\\
	& = & \frac{ 2^{-6} }{ 1 - 2^{-1} - 2^{-3} } < 1.%\\
%	& < & 1.
\end{eqnarray*}
\end{proof}

%%%%%%%%%%%%%%%%%%%%%%%%%%%%%%%%%%
\subsection{Sage Code for Table~\ref{table:IZ3} of \texorpdfstring{$\II(Z_3,q)$-Values}{Values of the Asymptotic Density of Z3}} \label{P3code}

The following code to generate Table~\ref{table:IZ3} was run with Sage 6.1.1 \parencite{S-14}.

%\lstset{langauge=Python}

%Oof, I, l, 1 all look alike.
%    return q*x^(2*L+1) - x^(4*L) + (1-q*x)*x^(5*L) + x^(6*L)
\begin{lstlisting}[frame=single]
# Calculate G(i), term i of expanded g(q^(-2)).
def r(L, q, x):
    X = x^L
    return q*x*X^2 - X^4 + X^5 - q*x*X^5 + X^6
def s(L, q, x):
    return 1 - q*x^(1 - L) + x^(-L)
def G(L, q, i):
    num = prod([s(L, q, q^(-2^(j + 1))) for j in range(i)])
    den = prod([1 - q^(1 - 2^(k + 1)) for k in range(i + 1)])
    return (-1)^i * r(L, q, q^(-2^(i + 1))) * num / den
# Calculate H(i), term i of expanded h(q^(-2)).
def u(L, q, x):
    return q*x^(4*L + 1) - x^(5*L) + q*x^(5*L + 1) - x^(6*L)
def v(L, q, x):
    return 1 - q*x^(1 - L) + x^(-L) - q*x^(1 - 2*L) + x^(-2*L)
def H(L, q, i):
    num = prod([v(L, q, q^(-2^(j + 1))) for j in range(i)])
    den = prod([1 - q^(1 - 2^(k + 1)) for k in range(i+1)])
    return (-1)^i * u(L, q, q^(-2^(i + 1))) * num / den
# Generate the first N terms of {a_n}.
def a(q,N):
    A = [0, q]
    for n in range(2, N + 1):
        A.append(q*A[-1] - ((n + 1)%2)*A[floor(n/2)])
    return A
# Calculate the partial sum of I(Z_3, q).
def I(q, N, M):
    A, partial = a(q, N), 0
    for L in range(1, N+1):
        terms = [G(L, q, n) + H(L, q, n) for n in range(M + 1)]
        partial += A[L]*sum(terms)
    return partial
# Output bounds on I(Z_3, q) for small values of q.
N = 31 # Level of precision.
for q in range(2, 7):
	print 'q = %d:' %q
	L, U = round(I(q, N, 4), N), round(I(q, N, 5) + 2^(-N), N)
	print 'Lower bound with N = %d and M = 4:' %N, L
	print 'Upper bound with N = %d and M = 5:' %N, U
\end{lstlisting}

	\section{Word Trees Illustrating Theorem \ref{P3}} \label{TREES}

From Section \ref{IZ3}: ``For fixed bifix-free word $L$ length $\ell$, define $b_m^\ell$ to count the number of $Z_2$ words with bifix $L$ that are $Z_2$-bifix-free $q$-ary words of length $m$.''

In each of the following images, a word is struck through if it is not counted by $b_m$ but its descendants are. It is hashed through if its descendants are also eliminated.

\begin{figure}[ht]
\centering
\begin{threeparttable}

\begin{tabular}{l}

\begin{tikzpicture}[grow'=right,level distance = 53pt,sibling distance=-5pt,every node/.style={scale=.75}]
	\Tree [.{$b$_3^1 = 2} [.{$b$_4^1 = 3} [.{$b$_5^1 = 6} [.{$b$_6^1=14} [.{$b$_7^1=25} [.{$b$_8^1 = 52} {$b$_9^1 = 100} ] ] ] ] ] ];
\end{tikzpicture}
\\
\begin{tikzpicture}[grow'=right,level distance=55pt, sibling distance=-4pt,every node/.style={scale=.75}]
	\Tree [.000
			[.\sout{0000} 
				[.\sout{00000} 
					[.\xout{000000} ]
					[.000100 
						[.0000100 
							[.00000100 
								[.000000100 ]
								[.000010100 ]
							]
							[.00001100 
								[.000001100 ]
								[.000011100 ]
							]
						]
						[.0001100 
							[.00010100 
								[.000100100 ]
								[.000110100 ]
							]
							[.00011100 
								[.000101100 ]
								[.000111100 ]
							]
						]
					]
				]
				[.00100 
					[.001000 
						[.0010000 
							[.00100000 
								[.001000000 ]
								[.001010000 ]
							]
							[.00101000 
								[.001001000 ]
								[.001011000 ]
							]
						]
						[.0011000 
							[.00110000 
								[.001100000 ]
								[.001110000 ]
							]
							[.00111000 
								[.001101000 ]
								[.001111000 ]
							]
						]
					]
					[.001100 
						[.0010100 
							[.\sout{00100100} 
								[.\sout{001000100} ]
								[.001010100 ]
							]
							[.00101100 
								[.001001100 ]
								[.001011100 ]
							]
						]
						[.0011100 
							[.00110100 
								[.001100100 ]
								[.001110100 ]
							]
							[.00111100 
								[.001101100 ]
								[.001111100 ]
							]
						]
					]
				]
			] 
			[.0010
				[.00010 
					[.000010 
						[.0000010 
							[.00000010 
								[.000000010 ]
								[.000010010 ]
							]
							[.00001010 
								[.000001010 ]
								[.000011010 ]
							]
						]
						[.0001010 
							[.00010010 
								[.\sout{000100010} ]
								[.000110010 ]
							]
							[.00011010 
								[.000101010 ]
								[.000111010 ]
							]
						]
					]
					[.000110 
						[.0000110 
							[.00000110 
								[.000000110 ]
								[.000010110 ]
							]
							[.00001110 
								[.000001110 ]
								[.000011110 ]
							]
						]
						[.0001110 
							[.00010110 
								[.000100110 ]
								[.000110110 ]
							]
							[.00011110 
								[.000101110 ]
								[.000111110 ]
							]
						]
					]
				]
				[.00110 
					[.001010 
						[.\sout{0010010} 
							[.\xout{00100010} ]
							[.00101010 
								[.001001010 ]
								[.001011010 ]
							]
						]
						[.0011010 
							[.00110010 
								[.001100010 ]
								[.001110010 ]
							]
							[.00111010 
								[.001101010 ]
								[.001111010 ]
							]
						]
					]
					[.001110 
						[.0010110 
							[.00100110 
								[.001000110 ]
								[.001010110 ]
							]
							[.00101110 
								[.001001110 ]
								[.001011110 ]
							]
						]
						[.0011110 
							[.00110110 
								[.\sout{001100110} ]
								[.001110110 ]
							]
							[.00111110 
								[.001101110 ]
								[.001111110 ]
							]
						]
					]
				]
			 ]
		];
	\draw (1.62,0.13) rectangle (13,5.59);
	\draw (13,2.86)node[left]{$d_n^1$};
\end{tikzpicture}

\end{tabular}

\caption[Example word tree for Theorem \ref{P3} with $q=2, \ell = 1$.]{The `000' half of an example word tree for Theorem \ref{P3} with $q=2$, $L=\text{`0'}$, $\ell = |L| = 1$. The tree from LLLL counted by $d_n$ is boxed.} \label{figure:T21}

\end{threeparttable}
\end{figure}
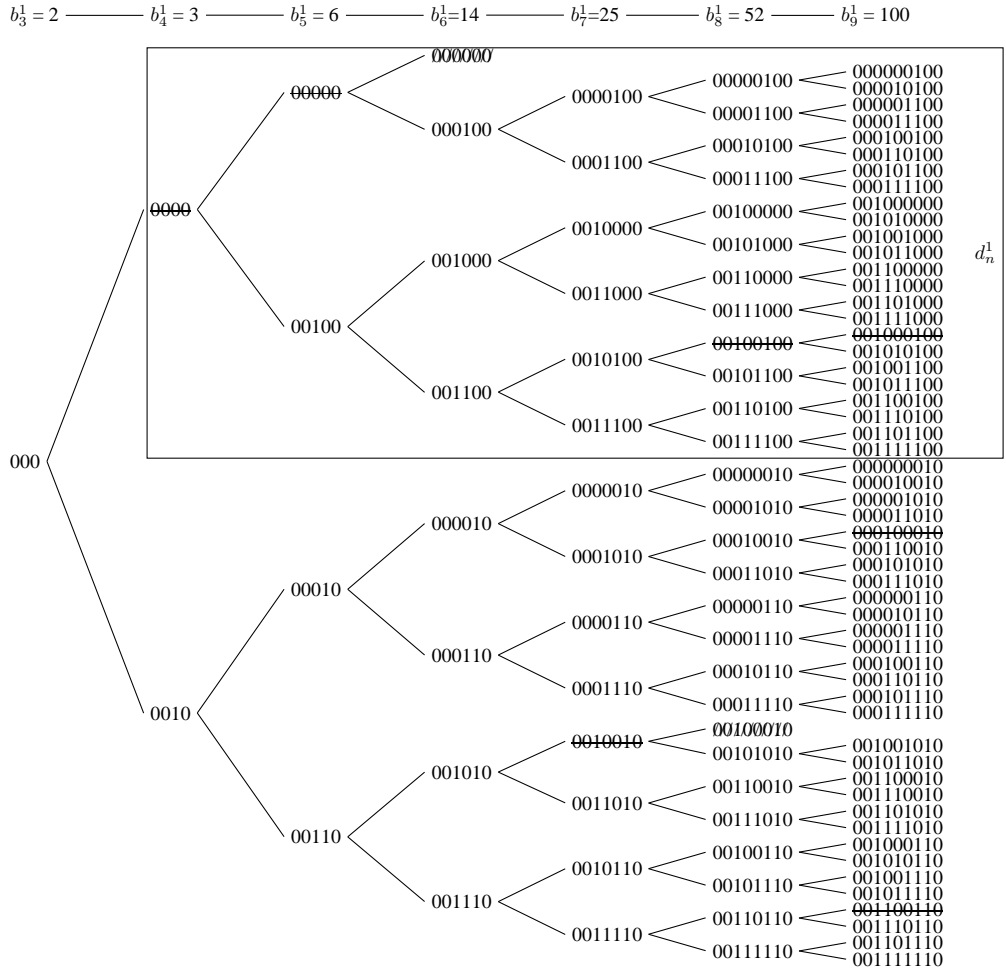

	\begin{figure}[ht]
\centering
\begin{threeparttable}

\begin{tabular}{l}

\begin{tikzpicture}[grow'=right,level distance = 60pt,sibling distance=-5pt,every node/.style={scale=.75}]
	\Tree [.{$b$_5^2 = 2} [.{$b$_5^2 = 4} [.{$b$_7^2 = 8} [.{$b$_8^2=13} [.{$b$_9^2=32} {$b$_{10}^2=58} ] ] ] ] ];
\end{tikzpicture}
\\
\begin{tikzpicture}[grow'=right,level distance=62pt, sibling distance=-5pt, every node/.style={scale=.75}]
	\Tree [.01001
			[.010001 
				[.0100001 
					[.01000001 
						[.010000001 
							[.0100000001 ]
							[.\sout{0100010001} ]
						]
						[.010010001 
							[.0100100001 ]
							[.0100110001 ]
						]
					]
					[.\sout{01001001} 
						[.010001001 
							[.0100001001 ]
							[.0100011001 ]
						]
						[.010011001 
							[.\xout{0100101001} ]
							[.0100111001 ]
						]
					]
				]
				[.0101001 
					[.01010001 
						[.010100001 
							[.0101000001 ]
							[.0101010001 ]
						]
						[.010110001 
							[.0101100001 ]
							[.0101110001 ]
						]
					]
					[.01011001 
						[.010101001 
							[.0101001001 ]
							[.0101011001 ]
						]
						[.010111001 
							[.0101101001 ]
							[.0101111001 ]
						]
					]
				]
			]
			[.010101 
				[.0100101 
					[.01000101 
						[.010000101 
							[.0100000101 ]
							[.0100010101 ]
						]
						[.010010101 
							[.0100100101 ]
							[.0100110101 ]
						]
					]
					[.01001101 
						[.010001101 
							[.0100001101 ]
							[.0100011101 ]
						]
						[.010011101 
							[.0100101101 ]
							[.0100111101 ]
						]
					]
				]
				[.0101101 
					[.\sout{01010101} 
						[.010100101 
							[.0101000101 ]
							[.\sout{0101010101} ]
						]
						[.010110101 
							[.0101100101 ]
							[.0101110101 ]
						]
					]
					[.01011101 
						[.010101101 
							[.0101001101 ]
							[.0101011101 ]
						]
						[.010111101 
							[.0101101101 ]
							[.0101111101 ]
						]
					]
				]
			]
		] ;

	\draw (5.94,-2.12) rectangle (12.3,-1.38);
	\draw (12.3,-1.75) node[left]{$d_n^2$};
\end{tikzpicture}
\\
\begin{tikzpicture}[grow'=right,level distance=62pt, sibling distance=-5pt, every node/.style={scale=.75}]
	\Tree [.01101
			[.011001 
				[.0110001 
					[.01100001 
						[.011000001 
							[.0110000001 ]
							[.0110010001 ]
						]
						[.011010001 
							[.0110100001 ]
							[.0110110001 ]
						]
					]
					[.01101001 
						[.011001001 
							[.0110001001 ]
							[.\sout{0110011001} ]
						]
						[.011011001 
							[.0110101001 ]
							[.0110111001 ]
						]
					]
				]
				[.0111001 
					[.01110001 
						[.011100001 
							[.0111000001 ]
							[.0111010001 ]
						]
						[.011110001 
							[.0111100001 ]
							[.0111110001 ]
						]
					]
					[.01111001 
						[.011101001 
							[.0111001001 ]
							[.0111011001 ]
						]
						[.011111001 
							[.0111101001 ]
							[.0111111001 ]
						]
					]
				]
			]
			[.011101 
				[.0110101 
					[.01100101 
						[.011000101 
							[.0110000101 ]
							[.0110010101 ]
						]
						[.011010101 
							[.0110100101 ]
							[.0110110101 ]
						]
					]
					[.\sout{01101101} 
						[.011001101 
							[.0110001101 ]
							[.0110011101 ]
						]
						[.011011101 
							[.\xout{0110101101} ]
							[.0110111101 ]
						]
					]
				]
				[.0111101 
					[.01110101 
						[.011100101 
							[.0111000101 ]
							[.0111010101 ]
						]
						[.011110101 
							[.0111100101 ]
							[.0111110101 ]
						]
					]
					[.01111101 
						[.011101101 
							[.0111001101 ]
							[.\sout{0111011101} ]
						]
						[.011111101 
							[.0111101101 ]
							[.0111111101 ]
						]
					]
				]
			]
		]
\end{tikzpicture}

\end{tabular}

\caption[Example word tree for Theorem \ref{P3} with $q=2, \ell = 2$.]{Example word tree for Theorem \ref{P3} with $q=2$, $L=\text{`01'}$, $\ell = |L| = 2$. The tree from LLLL counted by $d_n$ is boxed.} \label{figure:T22}

\end{threeparttable}
\end{figure}
	\begin{figure}[ht]
\centering
\begin{threeparttable}

\begin{tabular}{l}

\begin{tikzpicture}[grow'=right,level distance = 59pt,sibling distance=-5pt,every node/.style={scale=.75}]
	\Tree [.{$b$_7^3 = 2} [.{$b$_8^3 = 4} [.{$b$_9^3 = 8} [.{$b$_{10}^3 = 16} [.{$b$_{11}^3 = 30} {$b$_{12}^3 = 63} ]]]]];
\end{tikzpicture}
\\
\begin{tikzpicture}[grow'=right,level distance=60pt,sibling distance=-3pt,every node/.style={scale=.75}]
	\Tree [.1000100 
			[.10000100 
				[.100000100 
					[.1000000100 
						[.10000000100 
							[.100000000100 ]
							[.100000100100 ]
						]
						[.10000100100 
							[.100001000100 ]
							[.100001100100 ]
						]
					]
					[.1000010100 
						[.10000010100 
							[.100000010100 ]
							[.100000110100 ]
						]
						[.10000110100 
							[.100001010100 ]
							[.100001110100 ]
						]
					]
				]
				[.100010100 
					[.1000100100 
						[.\sout{10001000100} 
							[.100010000100 ]
							[.100010100100 ]
						]
						[.10001100100 
							[.100011000100 ]
							[.100011100100 ]
						]
					]
					[.1000110100 
						[.10001010100 
							[.100010010100 ]
							[.100010110100 ]
						]
						[.10001110100 
							[.100011010100 ]
							[.100011110100 ]
						]
					]
				]
			]
			[.10001100 
				[.100001100 
					[.1000001100 
						[.10000001100 
							[.100000001100 ]
							[.100000101100 ]
						]
						[.10000101100 
							[.100001001100 ]
							[.100001101100 ]
						]
					]
					[.1000011100 
						[.10000011100 
							[.100000011100 ]
							[.100000111100 ]
						]
						[.10000111100 
							[.100001011100 ]
							[.100001111100 ]
						]
					]
				]
				[.100011100 
					[.1000101100 
						[.10001001100 
							[.100010001100 ]
							[.100010101100 ]
						]
						[.10001101100 
							[.100011001100 ]
							[.100011101100 ]
						]
					]
					[.1000111100 
						[.10001011100 
							[.100010011100 ]
							[.100010111100 ]
						]
						[.10001111100 
							[.100011011100 ]
							[.100011111100 ]
						]
					]
				]
			]
		]
\end{tikzpicture}
\\
\begin{tikzpicture}[grow'=right,level distance=60pt,sibling distance=-3pt,every node/.style={scale=.75}]
	\Tree [.1001100 
			[.10010100 
				[.100100100 
					[.1001000100 
						[.10010000100 
							[.100100000100 ]
							[.\sout{100100100100} ]
						]
						[.10010100100 
							[.100101000100 ]
							[.100101100100 ]
						]
					]
					[.1001010100 
						[.10010010100 
							[.100100010100 ]
							[.100100110100 ]
						]
						[.10010110100 
							[.100101010100 ]
							[.100101110100 ]
						]
					]
				]
				[.100110100 
					[.1001100100 
						[.10001000100
							[.100110000100 ]
							[.100110100100 ]
						]
						[.10011100100 
							[.100111000100 ]
							[.100111100100 ]
						]
					]
					[.1001110100 
						[.10011010100 
							[.100110010100 ]
							[.100110110100 ]
						]
						[.10011110100 
							[.100111010100 ]
							[.100111110100 ]
						]
					]
				]
			]
			[.10011100 
				[.100101100 
					[.1001001100 
						[.10010001100 
							[.100100001100 ]
							[.100100101100 ]
						]
						[.10010101100 
							[.100101001100 ]
							[.100101101100 ]
						]
					]
					[.1001011100 
						[.10010011100 
							[.100100011100 ]
							[.100100111100 ]
						]
						[.10010111100 
							[.100101011100 ]
							[.100101111100 ]
						]
					]
				]
				[.100111100 
					[.1001101100 
						[.\sout{10011001100} 
							[.100110001100 ]
							[.100110101100 ]
						]
						[.10011101100 
							[.100111001100 ]
							[.100111101100 ]
						]
					]
					[.1001111100 
						[.10011011100 
							[.100110011100 ]
							[.100110111100 ]
						]
						[.10011111100 
							[.100111011100 ]
							[.100111111100 ]
						]
					]
				]
			]
		];

	\draw (9.7,3.37) rectangle (12.05,3.66);
	\draw (12.05,3.52) node[left] {$d_n^3$};
\end{tikzpicture}

\end{tabular}

\caption[Example word tree for Theorem \ref{P3} with $q=2, \ell = 3$.]{Example word tree for Theorem \ref{P3} with $q=2$, $L=\text{`100'}$, $\ell = |L| = 3$. The tree from LLLL counted by $d_n$ is boxed.} \label{figure:T23}

\end{threeparttable}
\end{figure}
	\begin{figure}[ht]
\centering
\begin{threeparttable}

\begin{tabular}{l}

\begin{tikzpicture}[grow'=right,level distance = 104pt,sibling distance=-5pt,every node/.style={scale=.75}]
	\Tree [.{$b$_3^{1} = 3} [.{$b$_4^{1} = 8} [.{$b$_5^{1} = 24} {$b$_6^1 = 78} ] ] ];
\end{tikzpicture}
\\
\begin{tikzpicture}[grow'=right,level distance = 105pt,sibling distance=-5pt,every node/.style={scale=.75}]
	\Tree [.000 
			[.\sout{0000} 
				[.\sout{00000} 
					[.\xout{000000} ]
					[.000100 ]
					[.000200 ]
				]
				[.00100 
					[.001000 ]
					[.001100 ]
					[.001200 ]
				]
				[.00200 
					[.002000 ]
					[.002100 ]
					[.002200 ]
				]
			]
			[.0010 
				[.00010 
					[.000010 ]
					[.000110 ]
					[.000210 ]
				]
				[.00110 
					[.001010 ]
					[.001110 ]
					[.001210 ]
				]
				[.00210 
					[.002010 ]
					[.002110 ]
					[.002210 ]
				]
			]
			[.0020 
				[.00020 
					[.000020 ]
					[.000120 ]
					[.000220 ]
				]
				[.00120 
					[.001020 ]
					[.001120 ]
					[.001220 ]
				]
				[.00220 
					[.002020 ]
					[.002120 ]
					[.002220 ]
				]
			]
		];

	\draw (3.37,.89) rectangle (12.3,2.59);
	\draw (12.3,1.74) node[left]{$d_n^1$};
\end{tikzpicture}
\\
\begin{tikzpicture}[grow'=right,level distance = 105pt,sibling distance=-5pt,every node/.style={scale=.75}]
	\Tree [.010 
			[.0100 
				[.01000 
					[.010000 ]
					[.010100 ]
					[.010200 ]
				]
				[.01100 
					[.011000 ]
					[.011100 ]
					[.011200 ]
				]
				[.01200 
					[.012000 ]
					[.012100 ]
					[.012200 ]
				]
			]
			[.0110 
				[.\sout{01010} 
					[.\xout{010010} ]
					[.010110 ]
					[.010210 ]
				]
				[.01110 
					[.011010 ]
					[.011110 ]
					[.011210 ]
				]
				[.01210 
					[.012010 ]
					[.012110 ]
					[.012210 ]
				]
			]
			[.0120 
				[.01020 
					[.010020 ]
					[.010120 ]
					[.010220 ]
				]
				[.01120 
					[.011020 ]
					[.011120 ]
					[.011220 ]
				]
				[.01220 
					[.012020 ]
					[.012120 ]
					[.012220 ]
				]
			]
		]
\end{tikzpicture}
\\
\begin{tikzpicture}[grow'=right,level distance = 105pt,sibling distance=-5pt,every node/.style={scale=.75}]
	\Tree [.020 
			[.0200 
				[.02000 
					[.020000 ]
					[.020100 ]
					[.020200 ]
				]
				[.02100 
					[.021000 ]
					[.021100 ]
					[.021200 ]
				]
				[.02200 
					[.022000 ]
					[.022100 ]
					[.022200 ]
				]
			]
			[.0210 
				[.02010 
					[.020010 ]
					[.020110 ]
					[.020210 ]
				]
				[.02110 
					[.021010 ]
					[.021110 ]
					[.021210 ]
				]
				[.02210 
					[.022010 ]
					[.022110 ]
					[.022210 ]
				]
			]
			[.0220 
				[.\sout{02020} 
					[.\xout{020020} ]
					[.020120 ]
					[.020220 ]
				]
				[.02120 
					[.021020 ]
					[.021120 ]
					[.021220 ]
				]
				[.02220 
					[.022020 ]
					[.022120 ]
					[.022220 ]
				]
			]
		]
\end{tikzpicture}

\end{tabular}

\caption[Example word tree for Theorem \ref{P3} with $q=3, \ell = 1$.]{Example word tree for Theorem \ref{P3} with $q=3$, $L=\text{`0'}$, $\ell = |L| = 1$. The tree from LLLL counted by $d_n$ is boxed.} \label{figure:T31}

\end{threeparttable}
\end{figure}

%\medskip
%
%\begin{tikzpicture}[grow'=right,level distance=60pt]
%	\Tree [.b_5^2=2 [.b_6^2=4 [.b_7^2=8 [.b_8^2=13 [.b_9^2=32 b_{10}^2=58 ]]]]]
%\end{tikzpicture}

%\include{AppendixZ2Z3}
	
%%%%%%%%%%%%%%%%%%%%%%%
%\bibliography{refs}{}
%\bibliographystyle{amsplain}
\printbibliography

\end{document}